\date{}
\tikzstyle{tikzfig}=[baseline=-0.25em,scale=0.5]
\tikzstyle{none}=[inner sep=0mm]
\newcommand{\tikzfig}[1]{
{\tikzstyle{every picture}=[tikzfig]
\IfFileExists{#1.tikz}
  {\input{#1.tikz}}
  {
    \IfFileExists{./figures/#1.tikz}
      {\input{./figures/#1.tikz}}
      {\tikz[baseline=-0.5em]{\node[draw=red,font=\color{red},fill=red!10!white] {\textit{#1}};}}
  }}
}
\tikzstyle{every loop}=[]
\tikzstyle{dot}=[fill=white, draw=black, shape=circle,scale=0.5]
\tikzstyle{root}=[fill=black, draw=black, shape=circle,scale=0.5]
\tikzstyle{directed edge}=[draw=black, fill=none, tikzit draw=black, -{stealth}]
\tikzstyle{dash line}=[-, draw=black, fill=none, dashed]
\theoremstyle{definition}
\newtheorem{theorem}{Theorem}[section]
\newtheorem{lemma}[theorem]{Lemma}
\newtheorem{remark}[theorem]{Remark}
\newtheorem{corollary}[theorem]{Corollary}
\newtheorem{example}[theorem]{Example}
\newcommand{\Tr}{\text{Tr}}
\newcommand{\vol}{\text{vol}}
\newcommand{\sgn}{\text{sgn}}
\newcommand{\adj}{\text{adj}}
\newcommand{\rank}{\text{rank}}
\title{Forest formulas of discrete Green's functions}
\author{Fan Chung\thanks{Department of Mathematics, University of California at San Diego, La Jolla, CA, 92093 USA. Email: {\tt fan@ucsd.edu}.}\and Ji Zeng\thanks{Department of Mathematics, University of California at San Diego, La Jolla, CA, 92093 USA. Email:
{\tt jzeng@ucsd.edu}. Partly supported by NSF grant DMS-1800746.}}
\begin{document}

\maketitle

\begin{abstract}
The discrete Green's functions  are the pseudoinverse (or the inverse) of the Laplacian  (or its variations) of a graph. In this paper, we will give combinatorial interpretations of Green's functions in terms of 
enumerating trees and forests in a graph that will be used to derive further formulas for several graph invariants. For example, we show that the trace of the Green's function $\mathbf{G}$ associated with the combinatorial Laplacian of a connected simple graph $\Gamma$ on $n$ vertices satisfies
\begin{equation*}
    \Tr(\mathbf{G})=\sum_{\lambda_i \neq 0} \frac 1 {\lambda_i}= \frac{1}{n\tau}|\mathbb{F}^*_2|
\end{equation*}
where $\lambda_i$ denotes the eigenvalues of the combinatorial Laplacian, $\tau$ denotes the number of spanning trees, and $\mathbb{F}^*_2$ denotes the set of rooted spanning $2$-forests in $\Gamma$.

We will prove forest formulas for discrete Green's functions for directed and weighted graphs and apply them to study random walks on graphs and digraphs. We derive a forest expression of the hitting time for digraphs, which gives combinatorial proofs to old and new results about hitting times, traces of discrete Green's functions, and other related quantities.
\end{abstract}

\section{Introduction}\label{intro}
The classical matrix-tree theorem of Kirchhoff \cite{kirchhoff1847ueber} states that the number of spanning trees in a graph is equal to the determinant of a principle minor of the combinatorial Laplacian. In this paper, we consider the Green's functions which are the pseudoinverses of the Laplacians of graphs. It is our goal to follow the spirit of Kirchhoff to find the inherent combinatorial meanings for the Green's functions. We will show that Green's functions can be expressed as specific enumerations of trees and forests in the graph.
 
Since discrete Green's functions for graphs were introduced in 2000 \cite{chung2000discrete}, there have been numerous related articles most of which are in applications while relatively few are graph-theoretical. Previously, Chebotarev and Agaev \cite{chebotarev2002forest} considered two kinds of pseudoinverses of the combinatorial Laplacian $L$ for a directed graph (shortly digraph). The two inverses, called the group inverse $L^\#$ and the Moore-Penrose inverse $L^+$, of a digraph Laplacian are not equal in general (see \cite{chebotarev2002forest} Section 9). In \cite{chebotarev2002forest}, Chebotarev and Agaev derived the group inverse $L^\#$ of the Laplacian $L$ for a weighted digraph as an enumeration of spanning rooted forests. For an undirected graph $\Gamma$, the two types of inverses coincide and are called the Green's function of $\Gamma$ in \cite{chung2000discrete}. Xu and Yau \cite{xu2013discrete} defined Chung-Yau invariants for a simple graph $\Gamma$ and derived an expression of the Green's function in terms of these invariants. This work was further generalized to weighted graphs in \cite{chang2014spanning} and \cite{chang2017chung}. The Chung-Yau invariants can be  computed as an enumeration of a certain class of subgraphs analogous to spanning linear subgraphs of $\Gamma$ (see \cite{xu2013discrete} Section 2). Beveridge \cite{beveridge2016hitting} considered an analogue of the Green's function (which technically isn't either one of the two pseudoinverses considered in \cite{chebotarev2002forest}) for a strongly-connected weighted digraph and proved a formula for it involving the hitting time.

We list three types of Laplacians with their associated Green's functions for graphs and digraphs, weighted or unweighted (while the detailed definitions will be given in Section \ref{pre}): 
\begin{itemize}
\item The combinatorial Laplacian $L=D-A$ where $D$ is the diagonal degree matrix and $A$ is the adjacency matrix. The Moore-Penrose pseudoinverse of $L$ is called the combinatorial Green's function and is denoted by $\mathbf{G}$.
\item The normalized Laplacian $\mathcal{L}= \Pi^{1/2}D^{-1}L\Pi^{-1/2}$, where $\Pi$ is the diagonal matrix of the stationary distribution. For an undirected graph, the formula simplifies as $\mathcal{L}= D^{-1/2}LD^{-1/2}$. The Moore-Penrose pseudoinverse of $\mathcal{L}$ is called the normalized Green's function and is denoted by $\mathcal{G}$.
\item The generalized Laplacian $\mathcal{L}_t = tI+ \mathcal{L}$ with an additional scalar $t>0$. The associated Green's functions $\mathcal{G}_t$ is the inverse of $\mathcal{L}_t$.
 \end{itemize}
 
We remark that the combinatorial Laplacian $L$ arose early in the study of electrical networks \cite{kirchhoff1847ueber} and is instrumental for enumerating various combinatorial objects. The normalized Laplacian $\mathcal{L}$ is closely related to random walks since irreversible Markov chains can be viewed as random walks on a weighted digraph with transition probability matrix $P=D^{-1}A$ and $\mathcal{L}=\Pi^{1/2}(I-P)\Pi^{-1/2}$. The generalized Green's function $\mathcal{G}_t$ has applications related to ranking algorithms (such as Google's Pagerank algorithms). The combinatorial expressions for the generalized Green's functions $\mathcal{G}_t$ involve rooted forests and can be found in \cite{chung2010zhao}, so we will not discuss this type here.
 
Our expressions for Green's functions $\mathbf{G}$ and $\mathcal{G}$ involve rooted or unrooted spanning forests. Readers unfamiliar with these objects are referred to Section \ref{pre}. Figure \ref{fig:intro_example} includes small examples that will be referred to later.
 \begin{figure}[h]
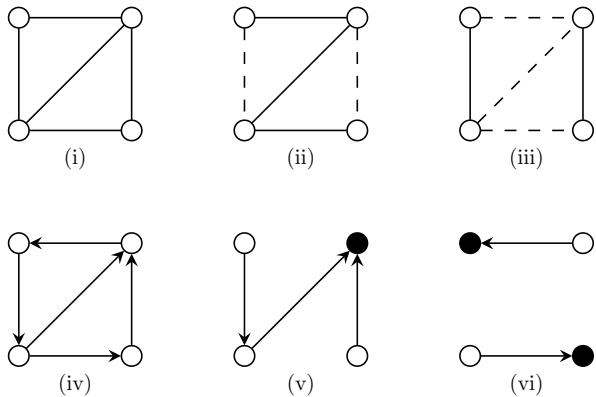

    \centering
    \scalebox{1.5}{\tikzfig{intro_example}}
    \caption{(i) A graph $\Gamma_1$; (ii) A spanning tree of $\Gamma_1$; (iii) A spanning $2$-forest of $\Gamma_1$. (iv) A digraph $\Gamma_2$; (v) A rooted spanning tree of $\Gamma_2$; (vi) A rooted spanning $2$-forest of $\Gamma_2$. (Roots pictured as solid nodes.)}
    \label{fig:intro_example}
\end{figure}
 
In the following forest formulas for $\mathbf{G}$ and $\mathcal{G}$, a spanning $2$-forest is denoted by $T_1\cup T_2$, where $T_1$ and $T_2$ are two subtrees as its two (weakly-)connected components. If $T_1\cup T_2$ is rooted, $r(T_i)$ denotes the root of the subtree $T_i$ respectively. We remark that, throughout this paper, the summations ranges over specified families of spanning $2$-forests, regardless of the order of $T_1$ and $T_2$.
\begin{theorem}\label{main1}
For a weighted strongly-connected digraph $\Gamma$, its combinatorial Green's function $\mathbf{G}$ satisfies
\begin{equation*}
     \mathbf{G}(u,v)=\frac{1}{n\sum_w \tau^2_w}\Big(\hspace{-.12in}\sum_{\substack{T_1\cup T_2\in \mathbb{F}^*_2\\u\in T_1,r(T_1)=v}}\hspace{-.12in}|T_2|\tau_{r(T_2)}\omega(T_1\cup T_2)-\hspace{-.12in}\sum_{\substack{T_1\cup T_2\in \mathbb{F}^*_2\\u\in T_2,r(T_1)=v}}\hspace{-.12in}|T_1|\tau_{r(T_2)}\omega(T_1\cup T_2)\Big),
\end{equation*}
where $\tau_w$ denotes the total weight of rooted spanning trees of $\Gamma$ with root $w$ and $\mathbb{F}^*_2$ denotes the family of rooted spanning $2$-forests in $\Gamma$.
\end{theorem}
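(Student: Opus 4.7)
The plan is to identify the matrix $\mathbf{M}$, whose $(u,v)$-entry is the right-hand side of the claim, with the Moore--Penrose pseudoinverse $\mathbf{G}=L^+$ of $L$. Since $\Gamma$ is strongly-connected, $L$ has rank $n-1$ with $\ker L=\text{span}(\mathbf{1})$ and $\ker L^T=\text{span}(\boldsymbol{\tau})$, where $\boldsymbol{\tau}:=(\tau_w)_{w}$ (by the digraph matrix-tree theorem applied to each diagonal cofactor of $L$). A short linear-algebra argument shows that any matrix $\mathbf{M}$ satisfying
\[
\mathbf{M}\boldsymbol{\tau}=0\qquad\text{and}\qquad \mathbf{M}L=I-\tfrac{1}{n}J
\]
must equal $L^+$: the second identity forces $(\mathbf{M}-L^+)L=0$, so each row of $\mathbf{M}-L^+$ lies in the left kernel of $L$ and $\mathbf{M}-L^+=\mathbf{c}\boldsymbol{\tau}^T$ for some column $\mathbf{c}$; the first identity then gives $\|\boldsymbol{\tau}\|^2\mathbf{c}=0$, hence $\mathbf{c}=0$. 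I therefore focus on verifying these two identities for the forest formula.

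The identity $\mathbf{M}\boldsymbol{\tau}=0$ is a relabeling exchange. Multiplying $\mathbf{M}(u,v)$ by $\tau_v$ and summing over $v$ absorbs the constraint $r(T_1)=v$ into a free choice of root, yielding the weight $\tau_{r(T_1)}\tau_{r(T_2)}\omega(T_1\cup T_2)$, which is symmetric under $T_1\leftrightarrow T_2$. Swapping the labels of the two subtrees in the second (negative) sum then matches it termwise with the first sum, giving zero. The companion identity $\mathbf{1}^T\mathbf{M}=0$, although not strictly needed, is an analogous check using $|T_1|+|T_2|=n$.

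The substantial content is $\mathbf{M}L=I-\tfrac{1}{n}J$, i.e.\ $\sum_w\mathbf{M}(u,w)L(w,v)=\delta_{uv}-\tfrac{1}{n}$. Expanding $L=D-A$ and rewriting the factor $\tau_{r(T_2)}\omega(T_1\cup T_2)$ as $\sum_{T:\,r(T)=r(T_2)}\omega(T)\omega(T_1\cup T_2)$, the left-hand side becomes a weighted signed count of quadruples $(T_1\cup T_2,T,w,e)$ where $e$ is either the directed edge $w\to v$ arising from $A(w,v)$ or a ``diagonal'' contribution from $D(v)$. The key combinatorial ingredient is the classical matrix-tree bijection: adjoining the edge $r(T_1)\to v$ to a rooted spanning $2$-forest $T_1\cup T_2$ produces a rooted spanning tree of $\Gamma$ with root $r(T_2)$ precisely when $v\in T_2$, and every such spanning tree equipped with a distinguished non-root vertex arises uniquely this way. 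When $v\in T_1$, the adjoined edge creates a rooted unicyclic structure whose cycle admits a canonical alternative edge to remove, pairing it with terms from $D(v)\mathbf{M}(u,v)$ in a sign-reversing involution. After cancellations, the surviving terms are indexed by ordered pairs of spanning trees sharing a common root and should contribute exactly $(n\delta_{uv}-1)\sum_x\tau_x^2$.

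The principal obstacle is executing these cancellations while tracking the auxiliary spanning tree $T$, whose root $r(T)=r(T_2)$ may change component under the involution and thereby reinterpret the factor $\tau_{r(T_2)}$ as a different tree count. The remedy is to package $T$ and $T_2$ into a single labeled configuration before performing the swap, in the spirit of the Chebotarev--Agaev derivation of the group inverse $L^\#$; the additional $-\tfrac{1}{n}J$ residue distinguishing $L^+$ from $L^\#$ emerges naturally as the orthogonal-projection correction. Dividing by the normalization $n\sum_x\tau_x^2$ then yields the desired $\delta_{uv}-\tfrac{1}{n}$, completing the identification $\mathbf{M}=\mathbf{G}$.
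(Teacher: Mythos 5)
Your overall frame is legitimate and genuinely different from the paper's route: since $\ker L=\mathrm{span}(\mathbf{1})$ and $\ker L^{*}=\mathrm{span}(\boldsymbol{\tau})$, the two conditions $\mathbf{M}\boldsymbol{\tau}=0$ and $\mathbf{M}L=I-\frac{1}{n}J$ do characterize $L^{+}$, and your verification of $\mathbf{M}\boldsymbol{\tau}=0$ by exchanging the roles of the two components is correct. (The paper instead gets $\mathbf{G}$ in one stroke from Lemma~\ref{pseudoinverse_trick} together with the all-minors matrix-tree theorem, Theorem~\ref{ammt}, so the determinantal machinery does all the bookkeeping that your plan must do by hand.)

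The gap is that the step carrying the entire content of the theorem, $\mathbf{M}L=I-\frac{1}{n}J$, is asserted rather than proved. Your sign-reversing involution for the unicyclic configurations is fine as far as it goes (delete the cycle edge leaving $v$ versus the cycle edge entering $v$); note, incidentally, that the tree component $T_2$ and hence $r(T_2)$ and the auxiliary tree $T$ are untouched by this swap, so the ``principal obstacle'' you flag does not actually arise there, and the proposed Chebotarev--Agaev-style ``remedy'' is aimed at the wrong difficulty. What remains after that cancellation is the real work, and your write-up covers it only with ``should contribute exactly $(n\delta_{uv}-1)\sum_x\tau_x^2$.'' Concretely, the surviving terms group by a spanning in-tree $T^{+}$ with root $b$, each weighted by $\omega(T^{+})\tau_b$ times a signed integer: writing $S_x$ for the set of vertices whose path to $b$ in $T^{+}$ passes through $x$, the diagonal term contributes $n\,\mathbf{1}[u\in S_v]-|S_v|$ when $b\neq v$, and each in-neighbor $w$ of $v$ in $T^{+}$ contributes $-(n\,\mathbf{1}[u\in S_w]-|S_w|)$. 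One must prove that this net integer equals $n\delta_{uv}-1$ for every single $T^{+}$, which requires the decomposition $S_v=\{v\}\cup\bigcup_w S_w$ (so $\sum_w|S_w|=|S_v|-1$) and a case analysis over $b=v$ versus $b\neq v$ and over the position of $u$ relative to $S_v$ and the $S_w$'s; only then does summing over $T^{+}$ give $(n\delta_{uv}-1)\sum_b\tau_b^2$ and hence $I-\frac1n J$ after normalization. This identity is true, so your approach can be completed, but as written the decisive counting step is missing, and the vague appeal to an ``orthogonal-projection correction'' explaining the $-\frac1n J$ does not substitute for it.
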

\begin{corollary}\label{main1_undirected}
For a  simple connected graph $\Gamma$,  the combinatorial Green's function $\mathbf{G}$ satisfies  \begin{equation*}
    \mathbf{G}(u,v)=\frac{1}{n^2\tau}\Big(\sum_{\substack{T_1 \cup T_2 \in \mathbb{F}_2\\u,v\in T_1}}|T_2|^2-\sum_{\substack{T_1 \cup T_2 \in \mathbb{F}_2\\u\in T_1, v\in T_2}}|T_1||T_2|\Big),
\end{equation*}
where $\tau$ is the number of (unrooted) spanning trees in $\Gamma$ and $\mathbb{F}_2$ denotes the family of (unrooted) spanning $2$-forests in $\Gamma$.
\end{corollary}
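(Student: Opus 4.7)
The plan is to derive the corollary by specializing Theorem \ref{main1} to the symmetric-digraph encoding of an undirected simple graph. I would view $\Gamma$ as a strongly-connected digraph in which each undirected edge $\{u,v\}$ is replaced by the pair of directed edges $(u,v)$ and $(v,u)$, each with weight $1$. The combinatorial Laplacian of this digraph coincides with the usual $L=D-A$ of $\Gamma$, so Theorem \ref{main1} applies to the same $\mathbf{G}$.

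The next step is to simplify all digraph-specific quantities using symmetry. Since every forest has $\omega=1$, and every rooted spanning tree of the symmetric digraph with a specified root $w$ corresponds bijectively to an undirected spanning tree of $\Gamma$ (orient all edges uniquely toward $w$), we obtain $\tau_w=\tau$ for every vertex $w$, and hence $n\sum_w\tau_w^2 = n^2\tau^2$, matching the denominator of the corollary. The same bijection between rooted trees in the digraph and unrooted trees in $\Gamma$, applied to each component separately, shows that every unrooted spanning $2$-forest $T_1\cup T_2$ of $\Gamma$ lifts to exactly $|T_1|\cdot|T_2|$ rooted spanning $2$-forests of the digraph (one for each independent choice of root in each subtree).

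With this dictionary the two sums in Theorem \ref{main1} can be rewritten as sums over $\mathbb{F}_2$. In the first sum, the constraint $u\in T_1$ and $r(T_1)=v$ forces the root of the component containing $v$ to be $v$; the root of the other component $T_2$ ranges freely over its $|T_2|$ vertices, and each such rooted lift contributes $|T_2|\cdot\tau_{r(T_2)}=|T_2|\tau$. Summed over the $|T_2|$ lifts, each underlying unrooted forest contributes $|T_2|^2\tau$, giving $\tau\sum_{u,v\in T_1}|T_2|^2$. For the second sum, the constraint $u\in T_2$ and $r(T_1)=v$ again fixes the root of the component containing $v$ but leaves the root of the component containing $u$ free; each of the $|T_2|$ lifts contributes $|T_1|\tau$, so the total per unrooted forest is $|T_1||T_2|\tau$, yielding $\tau\sum_{u\in T_1,\,v\in T_2}|T_1||T_2|$.

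Plugging these back into Theorem \ref{main1} and dividing numerator and denominator by $\tau$ gives exactly the formula in the corollary. The only real obstacle is bookkeeping: one must be careful that the condition ``$r(T_1)=v$'' eliminates the degree of freedom for the root of $v$'s component but leaves the other root free, so that each undirected $2$-forest is multiply counted by a factor of $|T_2|$ (not $|T_1||T_2|$) in each digraph sum; once this is pinned down the rest is algebraic bookkeeping.
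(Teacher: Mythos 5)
Your proposal is correct and is essentially the paper's own argument: the paper deduces the corollary from Theorem \ref{main1} by noting that all weights equal $1$, that $\tau_w=\tau$ for every $w$ (identity (\ref{equal_intree_condition})), and that rooted spanning $2$-forests correspond to unrooted ones with free choices of roots, which is exactly the lifting/counting you carry out. Your careful accounting that the constraint $r(T_1)=v$ leaves only the $|T_2|$ choices of root in the other component (yielding factors $|T_2|^2\tau$ and $|T_1||T_2|\tau$ per unrooted forest, then cancelling one $\tau$ against $n\sum_w\tau_w^2=n^2\tau^2$) is precisely the bookkeeping the paper leaves implicit.
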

\begin{corollary}\label{trace1_undirected}
For  a simple connected  graph $\Gamma$ on $n$ vertices, the trace of the combinatorial Green's function satisfies\begin{equation*}
    \Tr(\mathbf{G})=\frac 1 {n \tau} |\mathbb{F}^*_2|.
\end{equation*}
\end{corollary}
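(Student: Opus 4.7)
The plan is to derive the trace identity directly from Corollary \ref{main1_undirected} by setting $u=v$ and summing over all vertices, then reinterpreting the resulting forest sum combinatorially.

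First, I would specialize Corollary \ref{main1_undirected} to $u=v$. The second sum there requires $u\in T_1$ and $v\in T_2$ simultaneously, which is impossible when $u=v$ since the two components of a $2$-forest are vertex-disjoint. Hence the diagonal entry simplifies to
\begin{equation*}
\mathbf{G}(u,u)=\frac{1}{n^2\tau}\sum_{\substack{T_1\cup T_2\in\mathbb{F}_2\\ u\in T_1}}|T_2|^2,
\end{equation*}
where, by the convention on unordered forests, $T_1$ denotes the component containing $u$ and $T_2$ the other component.

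Next I would sum over $u$ and swap the order of summation. For a fixed unordered $2$-forest with components $A$ and $B$, the vertex $u$ contributes $|B|^2$ when $u\in A$ (there are $|A|$ such vertices) and $|A|^2$ when $u\in B$ (there are $|B|$ such vertices). Consequently
\begin{equation*}
\Tr(\mathbf{G})=\frac{1}{n^2\tau}\sum_{\{A,B\}\in\mathbb{F}_2}\bigl(|A|\,|B|^2+|B|\,|A|^2\bigr)=\frac{1}{n^2\tau}\sum_{\{A,B\}\in\mathbb{F}_2}|A|\,|B|\,(|A|+|B|).
\end{equation*}
Since $|A|+|B|=n$ for any spanning $2$-forest, this collapses to $\frac{1}{n\tau}\sum_{\{A,B\}}|A|\,|B|$.

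Finally, I would identify the sum $\sum_{\{A,B\}\in\mathbb{F}_2}|A|\,|B|$ with $|\mathbb{F}^*_2|$ via a simple bijective argument: a rooted spanning $2$-forest is exactly the data of an unrooted $2$-forest together with a choice of one root in each of its two components, and a component of size $k$ offers $k$ choices of root. Plugging this identity into the previous display yields $\Tr(\mathbf{G})=\frac{1}{n\tau}|\mathbb{F}^*_2|$. No step here looks genuinely hard; the only place one must be careful is the bookkeeping around the unordered convention for $\{T_1,T_2\}$ when swapping the double sum, which is what makes the factor $|A|+|B|=n$ appear and cancel one factor of $n$ in the denominator.
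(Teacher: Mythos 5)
Your proposal is correct and follows essentially the same route as the paper: specialize Corollary \ref{main1_undirected} to the diagonal (where the second sum vanishes), sum over $u$ and swap the summation order to get $\frac{1}{n\tau}\sum_{T_1\cup T_2\in\mathbb{F}_2}|T_1||T_2|$, and then identify this sum with $|\mathbb{F}^*_2|$ by choosing one root per component. The bookkeeping you flag (the unordered convention and the cancellation via $|T_1|+|T_2|=n$) is handled identically in the paper's proof.
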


\begin{theorem}\label{main2}
 For a strongly-connected  weighted digraph $\Gamma$,  the normalized Green's function  $\mathcal{G}$ satisfies
 \begin{equation*} 
 \mathcal{G}(u,v)=
 \frac{\sqrt{\pi_u\pi_v}}{\tau_v}
 \Big(\hspace{-.12in}
 \sum_{\substack{T_1\cup T_2\in \mathbb{F}^*_2\\u\in T_1,r(T_1)=v}}
 \hspace{-.12in}\pi(T_2)d_{r(T_2)}
     \omega(T_1\cup T_2)-\hspace{-.2in} \sum_{\substack{T_1\cup T_2\in \mathbb{F}^*_2\\u\in T_2,r(T_1)=v}}
     \hspace{-.12in}\pi(T_1)d_{r(T_2)}\omega(T_1\cup T_2)\Big),
\end{equation*}
where $\pi(T):=\sum_{z\in T}\pi_z$, the sum of stationary distributions of vertices in $T$.
\end{theorem}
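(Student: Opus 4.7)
The approach is to reduce the computation of $\mathcal{G}=\mathcal{L}^+$ to finding the group inverse of $I-P$, where $P=D^{-1}A$ is the random-walk transition matrix, and then apply a forest enumeration analogous to those behind Theorem~\ref{main1}.

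First, I would use the factorization $\mathcal{L}=\Pi^{1/2}(I-P)\Pi^{-1/2}$. By strong connectivity, both the left and right null spaces of $\mathcal{L}$ are one-dimensional and spanned by the unit vector $\phi:=\Pi^{1/2}\mathbf{1}$. Since $\ker\mathcal{L}=\ker\mathcal{L}^{*}$, the Moore--Penrose pseudoinverse coincides with the group inverse, and group-inverse identities are preserved under similarity. This yields
\[\mathcal{G}=\Pi^{1/2}\mathbf{H}\Pi^{-1/2},\qquad \text{equivalently}\qquad \mathcal{G}(u,v)=\sqrt{\pi_u/\pi_v}\,\mathbf{H}(u,v),\]
where $\mathbf{H}=(I-P)^{\#}$ is uniquely characterized by $(I-P)\mathbf{H}=\mathbf{H}(I-P)=I-\mathbf{1}\pi$, $\mathbf{H}\mathbf{1}=0$, and $\pi\mathbf{H}=0$.

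Next, I would compute the entries of $\mathbf{H}$ by writing $\mathbf{H}=[(I-P)+\mathbf{1}\pi]^{-1}-\mathbf{1}\pi$ and applying the cofactor formula for the inverse. Factoring the diagonal $D$ out of the identity $L=D(I-P)$, minors of $I-P$ convert into minors of $L$; by the matrix-tree theorem for weighted digraphs, principal minors of $L$ enumerate rooted spanning forests (one row/column removal giving $\tau_v$, two row/column removals giving rooted spanning 2-forest weights). Expanding by multilinearity in the rank-one correction $\mathbf{1}\pi$, each entry $\mathbf{H}(u,v)$ becomes a signed sum over rooted spanning 2-forests $T_1\cup T_2$ with $r(T_1)=v$, split into the two cases $u\in T_1$ versus $u\in T_2$. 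The factor $d_{r(T_2)}$ emerges from the ``missing'' diagonal entry at the root of $T_2$ in the conversion between $L$ and $I-P$, while the factors $\pi(T_1),\pi(T_2)$ accumulate the $\pi_z$-contributions of the rank-one correction at each vertex $z$ in the relevant subtree. Finally, multiplying by $\sqrt{\pi_u/\pi_v}$ and simplifying yields the claimed prefactor $\sqrt{\pi_u\pi_v}/\tau_v$.

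The main obstacle will be the combinatorial bookkeeping in the cofactor expansion: carefully tracking the permutation signs and edge weights so that the two opposite-signed terms (corresponding to $u\in T_1$ and $u\in T_2$) emerge exactly as stated. The sign analysis parallels that in the proof of Theorem~\ref{main1}, but is complicated by the interplay between forest minors of $L$ and the rank-one perturbation $\mathbf{1}\pi$. Once this combinatorial identification is established, the remaining algebraic simplifications are routine.
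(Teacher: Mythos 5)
Your plan is correct and is essentially the paper's own argument: the paper likewise uses a rank-one correction ($M^{+}=(M+xy^{*})^{-1}-yx^{*}$ with $x=y=\pi^{1/2}$, its Lemma \ref{pseudoinverse_trick}), expands determinants by multilinearity in that correction, converts the resulting minors into minors of $L$ by factoring out diagonal matrices, and applies the all minors matrix tree theorem with the same sign analysis as in Theorem \ref{main1}; your detour through $\ker\mathcal{L}=\ker\mathcal{L}^{*}$ and $\mathcal{G}=\Pi^{1/2}(I-P)^{\#}\Pi^{-1/2}$ is only a cosmetic reorganization of the paper's direct choice $M=\mathcal{L}=\Pi^{1/2}D^{-1}L\Pi^{-1/2}$. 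The one ingredient you should state explicitly is the Markov chain tree theorem (\ref{mctt}), which is needed to evaluate the determinant of the corrected matrix as $(\prod_c d_c^{-1})\sum_w d_w\tau_w$ and to turn $d_v/\sum_w d_w\tau_w$ into $\pi_v/\tau_v$, yielding the stated prefactor $\sqrt{\pi_u\pi_v}/\tau_v$.
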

\begin{corollary}\label{main2_undirected}
If $\Gamma$ is a connected simple graph,  we have \begin{equation*}
    \mathcal{G}(u,v)=\frac{\sqrt{d_ud_v}}{\vol^2(\Gamma)\tau}\Big(\sum_{\substack{T_1 \cup T_2 \in \mathbb{F}_2\\u,v\in T_1}}\vol^2(T_2)-\sum_{\substack{T_1 \cup T_2 \in \mathbb{F}_2\\u\in T_1, v\in T_2}}\vol(T_1)\vol(T_2)\Big).
\end{equation*}
\end{corollary}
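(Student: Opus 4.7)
The plan is to derive this corollary as a direct specialization of Theorem \ref{main2} to the simple undirected setting. Here the weighted-digraph machinery collapses: every edge weight equals $1$, so $\omega(T_1\cup T_2)=1$ for every spanning forest; the stationary distribution of the simple random walk is $\pi_z = d_z/\vol(\Gamma)$, giving $\pi(T) = \vol(T)/\vol(\Gamma)$; and the classical matrix-tree theorem implies $\tau_w=\tau$ for every vertex $w$. Under these substitutions the prefactor $\sqrt{\pi_u\pi_v}/\tau_v$ of Theorem \ref{main2} becomes $\sqrt{d_ud_v}/(\vol(\Gamma)\tau)$.

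Next I would convert the two sums over rooted spanning $2$-forests $\mathbb{F}^*_2$ in Theorem \ref{main2} into sums over unrooted spanning $2$-forests $\mathbb{F}_2$. In the first sum (the one with $u\in T_1$ and $r(T_1)=v$) the root of $T_1$ is pinned to $v$, while the root of $T_2$ is free. De-rooting $T_2$ and summing $d_{r(T_2)}$ over the $|T_2|$ choices of $r(T_2)\in T_2$ produces $\sum_{w\in T_2}d_w=\vol(T_2)$; combining this with $\pi(T_2)=\vol(T_2)/\vol(\Gamma)$ yields a summand of $\vol^2(T_2)/\vol(\Gamma)$, exactly matching the first term inside the parentheses of the corollary. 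The same bookkeeping applied to the second sum (with $u\in T_2$, $r(T_1)=v$) gives $\vol(T_1)\vol(T_2)/\vol(\Gamma)$; since the spanning $2$-forests are unordered pairs $\{T_1,T_2\}$, I can relabel so that $u\in T_1$ and $v\in T_2$, matching the convention of the corollary statement.

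Finally I would collect the overall scalar: the prefactor $\sqrt{d_ud_v}/(\vol(\Gamma)\tau)$ multiplied by the $1/\vol(\Gamma)$ produced by the sum conversion gives exactly $\sqrt{d_ud_v}/(\vol^2(\Gamma)\tau)$, proving the identity. The argument is essentially bookkeeping rather than a substantive new derivation; the only point requiring care is tracking the roles of $T_1$ and $T_2$ in each of the two sums and using the unorderedness to rewrite the second sum symmetrically as $u\in T_1$, $v\in T_2$. I do not anticipate a nontrivial obstacle beyond this careful accounting.
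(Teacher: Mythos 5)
Your proposal is correct and follows exactly the route the paper intends: specialize Theorem \ref{main2} using $\pi_z=d_z/\vol(\Gamma)$ (i.e.\ $\Pi=\vol(\Gamma)^{-1}D$), $\omega\equiv 1$, $\tau_v=\tau$ via identity (\ref{equal_intree_condition}), and de-root by summing $d_{r(T_2)}$ over the free choices of root in $T_2$ to produce $\vol(T_2)$. The bookkeeping of the prefactor and the relabeling of the unordered pair $\{T_1,T_2\}$ in the second sum are exactly the "free choices of roots" argument the paper sketches, so there is nothing further to add.
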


\begin{corollary}
\label{trace2}
For a weighted strongly-connected digraph $\Gamma$, the normalized Green's function $\mathcal{G}$ satisfies
 \begin{equation*} 
  \Tr(\mathcal{G})=
 \frac 1 {\sum_w d_w \tau_w} \sum_{T_1 \cup T_2 \in \mathbb{F}^*_2}d_{r(T_1)}d_{r(T_2)}\omega(T_1 \cup T_2).
\end{equation*}
\end{corollary}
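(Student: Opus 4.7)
The plan is to specialize Theorem~\ref{main2} to the diagonal ($v=u$), sum over $u$, and collapse the resulting expression using two facts: (i) the Markov chain tree identity $\pi_u = d_u\tau_u / \sum_w d_w\tau_w$ relating the stationary distribution to rooted spanning trees of $\Gamma$, and (ii) $\pi(T_1)+\pi(T_2)=1$ for any spanning $2$-forest, since its two components partition $V$.

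First, I would set $v=u$ in the formula from Theorem~\ref{main2}. In the first summand the constraint becomes simply $r(T_1)=u$ (which already forces $u\in T_1$), while in the second summand the pair of constraints $u\in T_2$ and $r(T_1)=u$ is unsatisfiable, because the root of $T_1$ lies in $T_1$ and not $T_2$. Hence only the first sum survives, with prefactor $\pi_u/\tau_u$; substituting $\pi_u/\tau_u = d_u/\sum_w d_w\tau_w$ gives
\begin{equation*}
\mathcal{G}(u,u)=\frac{d_u}{\sum_w d_w\tau_w}\sum_{\substack{T_1\cup T_2\in\mathbb{F}^*_2\\ r(T_1)=u}}\pi(T_2)\,d_{r(T_2)}\,\omega(T_1\cup T_2).
\end{equation*}

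I would then take $\Tr(\mathcal{G})=\sum_u\mathcal{G}(u,u)$ and regroup. The iterated sum $\sum_u\sum_{r(T_1)=u}$ runs over rooted spanning $2$-forests equipped with a chosen designation of which component is $T_1$, so each unordered rooted $2$-forest $\{C_a,C_b\}$ is encountered exactly twice, once with $T_1=C_a,T_2=C_b$ (via $u=a$) and once with $T_1=C_b,T_2=C_a$ (via $u=b$). Combining the two contributions, the factor $d_{r(T_1)}d_{r(T_2)}\omega(T_1\cup T_2)$ is common, while the $\pi$-dependent factors add to $\pi(C_b)+\pi(C_a)=\pi(V)=1$. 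What remains is exactly the sum of Corollary~\ref{trace2}.

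The main obstacle, in my view, is spotting this symmetrization: the factor $\pi(T_2)\,d_{r(T_2)}$ appearing in Theorem~\ref{main2} looks mismatched with the fully symmetric summand $d_{r(T_1)}d_{r(T_2)}\omega$ of the target, and the trick is to recognize that the ``double-counting'' of each $2$-forest across its two role assignments pairs the two asymmetric contributions into a single symmetric one via the normalization $\pi(V)=1$. The only other nontrivial input is the identity $\pi_u\propto d_u\tau_u$, which I expect is already available from the preliminaries in Section~\ref{pre}.
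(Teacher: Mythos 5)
Your proposal is correct and follows essentially the same route as the paper's own proof: specialize Theorem~\ref{main2} to the diagonal (where the second sum is vacuous), convert $\pi_u/\tau_u$ to $d_u/\sum_w d_w\tau_w$ via the Markov chain tree identity, and sum over $u$ so that each rooted spanning $2$-forest is met twice with $\pi(T_1)+\pi(T_2)=1$ collapsing the asymmetric factors. No gaps; the symmetrization step you highlight is exactly the one the paper uses.
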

\begin{corollary}\label{trace2_undirected}
For a connected simple graph $\Gamma$, the trace of the normalized Green's function $\mathcal{G}$ satisfies
\begin{equation*}
   \Tr(\mathcal{G})=\frac 1 { \tau \vol({\Gamma})}\sum_{\substack{T_1 \cup T_2 \in \mathbb{F}_2}} \vol{(T_1)}\vol{(T_2)}.
\end{equation*}
 \end{corollary}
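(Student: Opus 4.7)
The plan is to derive Corollary \ref{trace2_undirected} directly from Corollary \ref{main2_undirected} by setting $u=v$ and summing over vertices. Putting $u=v$ in the expression for $\mathcal{G}(u,v)$ kills the second sum immediately, since the condition ``$u \in T_1$ and $v \in T_2$'' cannot hold in any spanning $2$-forest when $u=v$. What remains is
\begin{equation*}
\mathcal{G}(u,u) \;=\; \frac{d_u}{\vol^2(\Gamma)\,\tau} \sum_{\substack{T_1 \cup T_2 \in \mathbb{F}_2 \\ u \in T_1}} \vol^2(T_2),
\end{equation*}
where in each summand $T_1$ denotes the component containing $u$ and $T_2$ the other component.

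Next I would sum over $u \in V(\Gamma)$ and swap the order of summation. For a fixed unordered $2$-forest with components $A$ and $B$, each vertex of $\Gamma$ lies in exactly one of $A$ or $B$, so the labeling convention ``$u \in T_1$'' forces $(T_1, T_2) = (A, B)$ when $u \in A$ and $(T_1, T_2) = (B, A)$ when $u \in B$. Hence the total contribution of $\{A,B\}$ to the double sum is
\begin{equation*}
\Bigl(\sum_{u \in A} d_u\Bigr)\vol^2(B) + \Bigl(\sum_{u \in B} d_u\Bigr)\vol^2(A) \;=\; \vol(A)\vol(B)\bigl(\vol(A)+\vol(B)\bigr).
\end{equation*}

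Finally, since $A$ and $B$ partition $V(\Gamma)$, we have $\vol(A) + \vol(B) = \vol(\Gamma)$, which factors out of the sum and cancels one factor of $\vol(\Gamma)$ in the denominator, yielding the claimed formula. The main obstacle here is only notational: one must be careful that the sums $\sum_{T_1 \cup T_2}$ are taken over unordered pairs of components while the condition $u \in T_1$ labels a specific one, so that the interchange of summations is justified and no double-counting occurs. Once that convention is pinned down, the derivation is a short bookkeeping argument leveraging the volume identity $\vol(T_1)+\vol(T_2)=\vol(\Gamma)$.
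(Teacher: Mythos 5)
Your derivation is correct, but it takes a slightly different route from the paper's. The paper obtains Corollary \ref{trace2_undirected} by specializing the digraph trace formula of Corollary \ref{trace2}: for a simple graph one has $\omega \equiv 1$ and $\tau_w = \tau$ by (\ref{equal_intree_condition}), so $\sum_w d_w\tau_w = \tau\,\vol(\Gamma)$, and summing $d_{r(T_1)}d_{r(T_2)}$ over all free choices of roots in a fixed unrooted $2$-forest produces $\vol(T_1)\vol(T_2)$. You instead start from the undirected entrywise formula of Corollary \ref{main2_undirected}, set $u=v$ (correctly noting the cross term vanishes since $u$ cannot lie in both components), and sum the diagonal, using $\vol(T_1)+\vol(T_2)=\vol(\Gamma)$ to cancel one factor of $\vol(\Gamma)$; this exactly mirrors the paper's own proof of Corollary \ref{trace1_undirected} for the combinatorial Green's function. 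Both arguments are short bookkeeping over unordered spanning $2$-forests and rest ultimately on Theorem \ref{main2}; the paper's route packages the root-counting step ($\sum_{\text{roots}} d_{r(T_1)}d_{r(T_2)} = \vol(T_1)\vol(T_2)$) into the passage from rooted to unrooted forests, while yours defers all forest bookkeeping to the trace computation and needs only the already-specialized undirected formula. Your handling of the unordered-pair convention (the label $T_1$ being fixed by the condition $u\in T_1$) is exactly the care the argument requires, so the interchange of summations is justified and there is no gap.
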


We remark that we stated some of the above corollaries for simple graphs since such expressions are most clean and elegant. One can state similar, and not much more complicated, formulas for weighted undirected graphs. Theorem \ref{main2} involves values of stationary distributions hence, strictly speaking, is not purely an enumeration of forests. Still, we can apply the Markov chain tree theorem (\ref{mctt}) to Theorem \ref{main2} and obtain a combinatorial expression of $\mathcal{G}$ involving only forest enumerations and degrees. Also, the normalized Laplacian $\mathcal{L}$ is somewhat special so that its Moore-Penrose inverse equal to the group inverse (which is not true for $L$). These claims shall be further explained later.

This paper is organized as follows: Section \ref{pre} includes detailed definitions and some useful facts.  Section \ref{section_proof} contains the main proofs of our forest formulas as well as some  examples. In Section 4, we apply our forest formulas to study random walks on graphs and digraphs. We consider the forest expressions for several probabilistic quantities, such as the hitting time, the commute time, Kemeny's constant, etc. and we derive some old or new identities/inequalities for these quantities using the forest interpretations.

\section{Preliminaries}\label{pre}
In this paper, we use $\Gamma=(V,E)$ to denote a strongly-connected weighted digraph, where $V$ and $E$ are its vertex set and edge set respectively. For each edge $e=(u,v)$ we denote its weight by $\omega(e)=\omega(u,v)$. It's required that $\omega(u,v)=0$ for $(u,v)\not\in E$.

For simplicity, we do not allow loops in all (di-)graphs in this paper. We also require the weights to be non-negative, although some of our forest formulas still hold if negative weights are allowed.

In this paper, each digraph shall come with a total order on its vertex set for the purpose of  its matrix representation. The adjacency matrix of $\Gamma$ is defined by \begin{equation*}
    A(u,v):=\begin{cases}
	\omega(u,v)\quad \text{, if $(u,v)\in E$};&\\
	0\quad\text{, otherwise.}
	\end{cases}
\end{equation*}
The transpose of a matrix $M$ is denoted as $M^*$.

For each vertex $u$, we define its (out-)degree to be $d_u:=\sum_{v} \omega(u,v)$ and denote the matrix $D:=\text{Diag}[d_u]$. The volume of a subset $U\subset V$ refers to the quantity $\vol(U):=\sum_{u\in U}d_u$.

A typical random walk on $\Gamma$ is defined by the probability transition matrix  $P:=D^{-1}A$. If $\Gamma$ is strongly-connected, there's a unique vector $(\pi_u)_{u\in V}$,  satisfying $\pi_u>0 $ for all $u$,  $\sum_u \pi_u=1$, and $\pi^*P=\pi^*$ (\cite{doyle1984random}Section 1.3). We will call this vector the stationary distribution. (Although aperiodicity is a necessary condition for the convergence of a random walk, we note that a lazy walk with transition probability matrix $\frac{I+P}{2}$ always converges to $\pi$.) We write $\Pi:=\text{Diag}[\pi_u]$.

For a strongly-connected digraph $\Gamma$, the normalized Laplacian matrix is defined by $\mathcal{L}:=\Pi^{\frac{1}{2}}(I-P)\Pi^{-\frac{1}{2}}$ (see \cite{boley2011commute}). For an undirected weighted graph $\Gamma$, the identity $\Pi=\vol(\Gamma)^{-1}D$ is easy to verify and consequently we have $\mathcal{L}=D^{\frac{1}{2}}(I-P)D^{-\frac{1}{2}}$ as in \cite{chungbook}.

The combinatorial Green's function $\mathbf{G}$ (resp. normalized Green's function $\mathcal{G}$) is defined to be the Moore-Penrose pseudoinverse of the combinatorial Laplacian $L$ (resp. normalized Laplacian $\mathcal{L}$). There are many equivalent definitions of Moore-Penrose pseudoinverse, we refer our readers to \cite{horn2012matrix}7.3.P7: Let $M$ be an $m$-by-$n$ matrix with singular value decomposition $M=V\Sigma W^*$, then its Moore-Penrose pseudoinverse $M^+$ is $W\Sigma^+ V^*$ where $\Sigma^+$ is obtained from $\Sigma$ by replacing each nonzero singular value by its reciprocal and then taking the transpose.

Next, we turn to definitions and notations of rooted or unrooted forests. A $k$-forest is an undirected graph that has $k$ connected components and no cycles. A $1$-forest is a tree. A rooted $k$-forest is a digraph, whose underlying graph structure is a $k$-forest, and each weakly-connected component has a vertex, called a root, such that every vertex in that component has a unique directed path to the root. A rooted $1$-forest is called an in-tree or a rooted tree. A sub-digraph is said to be spanning if it contains all vertices of the ambient digraph.
We denote by $\mathbb{F}^*_k$ the set of spanning rooted $k$-forests of the considered graph or digraph. When we are considering an undirected graph, we use $\mathbb{F}_k$ to denote the set of spanning unrooted $k$-forests.

For a rooted or unrooted $k$-forest $F$ of $\Gamma$, we define its weight as \begin{equation*}
    \omega(F)=\prod_{e\in E(F)}\omega(e)
\end{equation*}and for a vertex $u$ in $\Gamma$, we define \begin{equation*}
    \tau_u=\sum_{T\in \mathbb{F}_1^*;r(T)=u} \omega(T).
\end{equation*}
We remark that when $\Gamma$ is undirected, each vertex in a tree can be chosen as the root, so we have\begin{equation}\label{equal_intree_condition}
    \tau_u=\tau, \forall u\in V
\end{equation} where $\tau$ is the total weight of all spanning (unrooted) trees of $\Gamma$.

We shall use the following identity in our proof, known as the Markov chain tree theorem\cite{leighton1983markov}:\begin{equation}
    \label{mctt} \pi_u=\frac{d_u\tau_u}{\sum_w d_w\tau_w}.
\end{equation}

We end this section by stating two results that will be used in the proofs later. The first one is called the all minors matrix tree theorem. The second one is a well-known trick in linear algebra.
\begin{theorem}
[\cite{chaiken1982combinatorial}]\label{ammt}
Let $\Gamma=(V,E)$ be any weighted digraph with a total order on $V$, and $U,W$ two subsets of $V$ s.t. $|U|=|W|$. Write $U=\{u_1<u_2<\dots<u_k\}$ and $W=\{w_1<w_2<\dots<w_k\}$, then we have\begin{equation*}
    \sgn_V(U)\sgn_V(W)\det(L_{\bar{U},\bar{W}})=\sum_{\mu\in S_k} \sgn(\mu)\hspace{-.12in}\sum_{\substack{T_1\cup\dots\cup T_k\in \mathbb{F}^*_k\\ w_i\in T_{\mu(i)},r(T_i)=u_i}}\hspace{-.12in} \omega(T_1\cup\dots\cup T_k),
\end{equation*}
where $L_{\bar{U},\bar{W}}$ denotes the submatrix of $L$ with rows (resp. columns) restricted to $\bar{U}=V\setminus U$ (resp. $\bar{W}=V\setminus W$) and $\sgn_V(U)$ (similarly $\sgn_V(W)$) is defined as $\sgn_V(U)=(-1)^{\sum_{v_i\in U} i}$. Here $v_i$ means the $i$-th smallest vertex in the total order of $V$.
\end{theorem}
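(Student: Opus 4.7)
My plan is to prove this by generalizing the Cauchy-Binet-based proof of Kirchhoff's matrix-tree theorem. I would first factor $L = PQ^{T}$, where $P$ and $Q$ are the $n\times |E|$ matrices defined by
$$P(u,e) = \omega(e)\cdot[u=\text{tail}(e)],\qquad Q(u,e) = [u=\text{tail}(e)] - [u=\text{head}(e)].$$
A direct entrywise check shows that $PQ^{T} = D - A = L$, and restricting to the indicated rows and columns yields $L_{\bar U,\bar W} = P_{\bar U,:}\,(Q_{\bar W,:})^{T}$. Applying the Cauchy-Binet formula then gives
$$\det L_{\bar U,\bar W} = \sum_{\substack{S\subseteq E\\ |S|=n-k}} \det(P_{\bar U,S})\,\det(Q_{\bar W,S}).$$

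Next I would classify the nonzero summands. The minor $\det(P_{\bar U,S})$ vanishes unless the edges of $S$ assign a unique out-edge to each vertex of $\bar U$ and no out-edges at all to vertices of $U$; equivalently, $S$ forms a functional digraph on $\bar U$ with sinks drawn from $U$, and its value is then $\pm\omega(S)$. For the second factor, I would show that whenever $S$ contains a directed cycle whose entire weakly connected component in $S$ lies inside $\bar W$, the rows of $Q_{\bar W,S}$ indexed by that component sum to zero: each edge internal to the component contributes $+1$ to its tail row and $-1$ to its head row, and since the component is closed under the functional-digraph structure, no edges leave it. Thus $\det(Q_{\bar W,S})=0$ in that case. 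The surviving subsets $S$ are precisely rooted spanning $k$-forests whose roots form the set $U$, and the permutation $\mu\in S_{k}$ is recovered by following the unique directed path from each $w_{j}$ back to its root $u_{\mu(j)}$.

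Finally, for a surviving forest $S$ I would evaluate $\det(Q_{\bar W,S}) = \pm 1$ by iteratively expanding along the rows indexed by leaves of the forest (vertices of $\bar U$ that are heads of no edge in $S$), each such row having a single nonzero entry. The prefactor $\sgn_{V}(U)\sgn_{V}(W)\sgn(\mu)$ must then emerge as the product of three signs: the two reordering signs that appear when matching $\bar U$ and $\bar W$ to the column ordering of $S$ in the Cauchy-Binet expansion, together with the cumulative sign from the leaf-expansion of $\det(Q_{\bar W,S})$. The hardest piece is this sign bookkeeping; I would handle it by fixing a canonical ordering of $S$ (for instance, by the tail vertex in the order inherited from $V$) so that the sign of $\det(P_{\bar U,S})$ is exactly $\sgn_{V}(U)$, isolating the full $\mu$-dependence inside $\det(Q_{\bar W,S})$, where it can then be read off tree by tree via induction on the distance from each $w_{j}$ to $u_{\mu(j)}$.
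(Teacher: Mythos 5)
The paper itself does not prove this statement; it is quoted from Chaiken's all-minors matrix-tree theorem and used as a black box, so your proposal has to stand on its own. Your starting point is sound: $L=PQ^{*}$ with the weighted out-incidence matrix $P$ and the signed incidence matrix $Q$ does hold (entrywise check as you say), Cauchy--Binet applies, and $\det(P_{\bar U,S})\neq 0$ exactly when $S$ gives each vertex of $\bar U$ one out-edge and the vertices of $U$ none. The first genuine gap is in your classification of surviving terms. Your vanishing criterion only covers cyclic components whose vertex set lies inside $\bar W$; it says nothing about (i) cyclic components that contain a vertex of $W$, nor (ii) genuine forests rooted at $U$ in which one tree contains no vertex of $W$ while another contains two or more. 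Yet you immediately assert that the survivors are precisely the rooted spanning $k$-forests with root set $U$ and that ``the permutation $\mu$ is recovered,'' which presupposes each tree holds exactly one $w_j$. Both missing cases do die, but by an argument you did not give: the same row-sum computation shows that \emph{any} weakly connected component of $S$ (tree or cyclic) whose vertex set is disjoint from $W$ forces $\det(Q_{\bar W,S})=0$, and then a counting step --- $S$ has exactly $k$ tree components (one per root in $U$) plus possibly cyclic components, against only $k$ vertices of $W$ --- shows the surviving $S$ have no cycles and exactly one $W$-vertex per tree. Without that extra argument the step ``the surviving subsets are precisely rooted spanning $k$-forests'' does not follow.

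The second gap is the sign analysis, which is the entire point of the all-minors statement as opposed to the principal-minors one, and it is left as a plan whose one concrete anchor is wrong. If you order the columns of $S$ by tail vertex, then $P_{\bar U,S}$ is diagonal with positive entries, so $\det(P_{\bar U,S})=+\omega(S)$, not something carrying the sign $\sgn_V(U)$; with that normalization the whole prefactor $\sgn_V(U)\sgn_V(W)\sgn(\mu)$ must be extracted from $\det(Q_{\bar W,S})$ (equivalently from explicit row/column reordering signs), and that extraction is exactly the delicate bookkeeping you have deferred. A two-vertex sanity check already shows this: $V=\{1,2\}$ with edges $(1,2)$ and $(2,1)$, $U=\{1\}$, $W=\{2\}$ gives $\det(P_{\bar U,S})=\omega(2,1)>0$ and $\det(Q_{\bar W,S})=-1=\sgn_V(U)\sgn_V(W)\sgn(\mu)$. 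So the route (Cauchy--Binet plus a vanishing argument for cycle-containing terms) is viable and standard, but as written the proof is incomplete at precisely the two places where the all-minors theorem has content: ruling out all non-forest and mismatched-forest terms, and producing the sign $\sgn_V(U)\sgn_V(W)\sgn(\mu)$ forest by forest.
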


\begin{lemma}[\cite{horn2012matrix}0.8.12.3]\label{determinant_sum_lemma}
Let $M,N$ be $n\times n$ matrices, then\begin{equation*}
    \det(M+N)=\sum_{C\subset [n]} \det(M\xleftarrow{C} N)
\end{equation*}where $M\xleftarrow{C} N$ is the matrix obtained by replacing the $C$-indexed columns of $M$ with the corresponding columns of $N$.
\end{lemma}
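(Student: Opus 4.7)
The plan is to derive the identity directly from multilinearity of the determinant in its columns. Write $M_j$ and $N_j$ for the $j$-th columns of $M$ and $N$, so the $j$-th column of $M+N$ is $M_j+N_j$. Viewing $\det$ as a multilinear alternating form on columns, I would expand
$$\det(M+N)=\det\bigl(M_1+N_1,\,M_2+N_2,\,\ldots,\,M_n+N_n\bigr)$$
one column at a time. Each expansion of a single column slot splits the current determinant into two pieces (one that keeps $M_j$, one that uses $N_j$), and after performing this in all $n$ columns one arrives at exactly $2^n$ determinants, each of whose columns are drawn individually from either $M$ or $N$.

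These $2^n$ determinants are indexed naturally by the subset $C\subseteq[n]$ recording in which column positions the $N$-column was chosen over the $M$-column; by the definition of the notation, the term associated with $C$ is precisely $\det(M\xleftarrow{C} N)$. Summing over all $C\subseteq[n]$ then yields the claimed identity. The only (very minor) obstacle is the bookkeeping of the $2^n$ terms, which can be handled cleanly either by a straightforward induction on $n$, or, equivalently, by starting from the Leibniz formula
$$\det(M+N)=\sum_{\sigma\in S_n}\sgn(\sigma)\prod_{j=1}^n\bigl(M_{\sigma(j),j}+N_{\sigma(j),j}\bigr),$$
distributing each binomial factor in the product, grouping the resulting terms by the subset $C\subseteq[n]$ of positions that contributed an $N$-entry, and recognizing the inner sum for each fixed $C$ as $\det(M\xleftarrow{C} N)$.
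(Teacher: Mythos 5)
Your proof is correct: expanding $\det(M+N)$ by multilinearity in each column (or, equivalently, distributing the Leibniz product and grouping terms by the set $C$ of positions where the $N$-entry is taken) gives exactly the $2^n$ terms $\det(M\xleftarrow{C}N)$, and the bookkeeping you describe is routine. Note that the paper does not prove this lemma at all --- it simply cites it as a known fact from Horn and Johnson (0.8.12.3) --- so there is nothing to diverge from; your argument is the standard textbook derivation of that cited identity and fills in the proof completely.
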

\section{Proof of main results}\label{section_proof}
We shall use the following lemma to compute the Moore-Penrose pseudoinverse.
\begin{lemma}\label{pseudoinverse_trick}
Let $M$ be an $n$-by-$n$ matrix of rank $n-1$ with left unit kernel $x$ and right unit kernel $y$, then we have $\forall u,v\in [n]$, \begin{equation}
    \frac{(-1)^{u+v}}{\det(M+xy^*)}\det (M_{\bar{v},\bar{u}})=(xy^*)(v,u). \label{pseudoinverse_trick_1}
\end{equation} Moreover if $M^+$ is the Moore-Penrose pseudoinverse of $M$, we have $\forall u,v\in [n]$, \begin{equation}
    \frac{(-1)^{u+v}}{\det(M+xy^*)}\sum_{c\neq i}\det ((M\xleftarrow{c} xy^*)_{\bar{v},\bar{u}})=M^+(u,v).\label{pseudoinverse_trick_2}
\end{equation}
\end{lemma}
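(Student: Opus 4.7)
The plan is to derive both formulas from a rank-one analysis of $\adj(M)$ together with an explicit formula for $(M+xy^*)^{-1}$.

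For equation~(\ref{pseudoinverse_trick_1}), I would exploit that $\rank(M)=n-1$ forces $\adj(M)$ to be rank one. Indeed, $M\cdot\adj(M)=\det(M)\,I=0$ puts every column of $\adj(M)$ in the right null space of $M$, hence proportional to $y$; dually $\adj(M)\cdot M=0$ makes each row proportional to $x^*$. Thus $\adj(M)=c\cdot yx^*$ for a single scalar $c$. Applying Lemma~\ref{determinant_sum_lemma} to $\det(M+xy^*)$ with $N=xy^*$, the rank-one structure collapses the sum: since any two columns of $xy^*$ are proportional, only the $|C|\le 1$ terms survive, and a short calculation recovers the matrix determinant lemma
\[
    \det(M+xy^*)=\det(M)+y^*\adj(M)x=0+c(y^*y)(x^*x)=c,
\]
using that $x$ and $y$ are unit. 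Reading off the $(u,v)$-entry via $(\adj(M))_{uv}=(-1)^{u+v}\det(M_{\bar v,\bar u})$ and dividing through by $c=\det(M+xy^*)$ immediately gives (\ref{pseudoinverse_trick_1}).

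For equation~(\ref{pseudoinverse_trick_2}), I would first prove the auxiliary identity $(M+xy^*)^{-1}=M^+ + yx^*$ by direct multiplication. The ingredients are $MM^+=I-xx^*$ and $M^+M=I-yy^*$ (the orthogonal projectors onto $\mathrm{range}(M)$ and $\mathrm{range}(M^*)$), together with $My=x^*M=0$ and $M^+x=y^*M^+=0$ (the latter because $M^+$ shares its left and right null spaces with $M^*$). Invertibility of $M+xy^*$ is automatic since it sends $y$ to $x$ and maps $y^\perp$ isomorphically onto $x^\perp$. Cramer's rule then yields
\[
    M^+(u,v)+y_u x_v=\frac{(-1)^{u+v}}{\det(M+xy^*)}\det((M+xy^*)_{\bar v,\bar u}).
\]
I would expand the right-hand determinant using Lemma~\ref{determinant_sum_lemma} over subsets $C$ of the columns indexed by $[n]\setminus\{u\}$. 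Every column of $(xy^*)_{\bar v,\bar u}$ is a scalar multiple of $x_{\bar v}$, so any term with $|C|\ge 2$ has two proportional columns and vanishes. The $C=\emptyset$ term is $\det(M_{\bar v,\bar u})$, which by (\ref{pseudoinverse_trick_1}) equals $(-1)^{u+v}\det(M+xy^*)\cdot y_u x_v$ and therefore cancels the $y_u x_v$ on the left; what remains are precisely the $|C|=1$ contributions forming the sum in (\ref{pseudoinverse_trick_2}).

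The main obstacle is the clean identification $(M+xy^*)^{-1}=M^+ + yx^*$, which rests on the null-space symmetry of the Moore-Penrose inverse; once that and the rank-one collapse in Lemma~\ref{determinant_sum_lemma} are in hand, the rest is adjugate bookkeeping.
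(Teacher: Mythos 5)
Your proposal is correct and follows essentially the same route as the paper: identity (\ref{pseudoinverse_trick_1}) via the rank-one adjugate $\adj(M)=c\,yx^*$ with $c=\det(M+xy^*)$ extracted through Lemma~\ref{determinant_sum_lemma}, and identity (\ref{pseudoinverse_trick_2}) via $M^+=(M+xy^*)^{-1}-yx^*$, Cramer's rule, and the collapse of the column-replacement sum to $|C|\leq 1$. The only difference is that you verify $(M+xy^*)^{-1}=M^++yx^*$ by direct multiplication using the projector and null-space identities, a step the paper asserts without detailed justification.
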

\begin{proof}
We deduce (\ref{pseudoinverse_trick_2}) from (\ref{pseudoinverse_trick_1}) first: By definition of Moore-Penrose pseudoinverse, $M^+=(M+xy^*)^{-1}-yx^*$. By Cramer's rule and Lemma~\ref{determinant_sum_lemma},\begin{align*}
    M^+(u,v)&=\frac{(-1)^{u+v}}{\det(M+xy^*)} \det ((M+ xy^*)_{\bar{v},\bar{u}}) -xy^*(v,u)\\
    &=\frac{(-1)^{u+v}}{\det(M+xy^*)}\sum_{C\subset [n]\setminus u}\det ((M\xleftarrow{C} xy^*)_{\bar{v},\bar{u}})-xy^*(v,u).
\end{align*} Notice that $\det ((M\xleftarrow{C} xy^*)_{\bar{v},\bar{u}})=0$ for $|C|>1$ as any two columns of $xy^*$ are dependent, so we only need to consider $|C|=0$ or $1$, \begin{align*}
    M^+(u,v)&=\frac{(-1)^{u+v}}{\det(M+xy^*)}\sum_{c\neq u}\det ((M\xleftarrow{c} xy^*)_{\bar{v},\bar{u}})+\frac{(-1)^{u+v}}{\det(M+xy^*)}\det (M_{\bar{v},\bar{u}})-xy^*(v,u)\\
    &= \frac{(-1)^{u+v}}{\det(M+xy^*)}\sum_{c\neq i}\det ((M\xleftarrow{c} xy^*)_{\bar{v},\bar{u}})\text{, by identity (\ref{pseudoinverse_trick_1})}.
\end{align*}

Now we deduce (\ref{pseudoinverse_trick_1}): The adjugate matrix of $M$ satisfies $\adj(M)M=M\adj(M)=0$. As both the left and right kernel of $M$ has dimension $1$, we have $\adj(M)=\rho yx^*$ for some constant $\rho$. To prove (\ref{pseudoinverse_trick_1}), it suffices to show that $\rho=\det(M+xy^*)$. Indeed,\begin{align*}
    \det(M+xy^*)&= \sum_{u\in [n]} \det(M\xleftarrow{u} xy^*)\text{, by Lemma~\ref{determinant_sum_lemma}}\\
    &=\sum_u \sum_v xy^*(v,u) (-1)^{u+v} \det(M)_{\bar{v},\bar{u}}\\
    &=\sum_{u}\sum_v y_u\adj(M)(u,v)x_v\\
    &=y^* \adj(M) x=y^*(\rho yx^*)x=\rho,
\end{align*}as both $x$ and $y$ are unit vectors.
\end{proof}

We shall prove Theorem \ref{main1} in detail and sketch a proof for Theorem \ref{main2}, as the latter is just a variation of the former proof.
\begin{proof}[Proof of Theorem \ref{main1}]
Our goal is to prove the following identity:\begin{equation}
    \label{eq:main1_0}
    \mathbf{G}(u,v)=\frac{1}{(n\sum_w \tau^2_w)}\sum_{a\neq u}\sum_{b\neq v} \tau_b(\sum_{\substack{T_1\cup T_2\in \mathbb{F}^*_2\\u\in T_1,r(T_1)=v\\a\in T_2,r(T_2)=b }}\hspace{-.12in}\omega(T_1\cup T_2)-\hspace{-.12in}\sum_{\substack{T_1\cup T_2\in \mathbb{F}^*_2\\a\in T_1,r(T_1)=v\\u\in T_2,r(T_2)=b}}\hspace{-.12in}\omega(T_1\cup T_2)).
\end{equation}Since for each rooted $2$-forest with $v$ being one of the roots, the choice of $a$ is the size of the component that does not contain $u$, this identity indeed implies Theorem~\ref{main1}.

We apply Lemma \ref{pseudoinverse_trick} by taking $M=L$. From the definitions, the left unit null vector of $M$ is $x=\rho^{-1}(\frac{\pi_u}{d_u})_{u\in V}$ where $\rho^2=\sum_{w\in V} \pi_w^2/d_w^2$ and the right unit null vector $y=(\frac{1}{\sqrt{n}},\dots,\frac{1}{\sqrt{n}})^*$. We choose $N:=xy^*$ so that we have $N(b,a)=\rho^{-1}\frac{\pi_b}{d_b}\frac{1}{\sqrt{n}}$.

First, we use Lemma~\ref{determinant_sum_lemma} to compute $\det(M+N)$. Since $\rank(N)=1$ and $\rank(M)=n-1$, $\det(M\xleftarrow{C}N)=0$ for $|C|>1$ or $|C|=0$. Hence,\begin{align}
    \notag\det(M+N)&=\sum_{a}  \det(M\xleftarrow{a}N)=\sum_a\sum_b (-1)^{a+b} N(b,a)\det(M_{\bar{b},\bar{a}})\\
    \notag&=\sum_a\sum_b \rho^{-1}\frac{\pi_b}{d_b}\frac{1}{\sqrt{n}}\sgn_V(a)\sgn_V(b)\det(L_{\bar{b},\bar{a}})\\
    \notag&=\sum_b \rho^{-1}\frac{\pi_b}{d_b}\frac{1}{\sqrt{n}}\sum_a\sum_{\substack{T\in \mathbb{F}^*_1\\r(T)=b,a\in T}}\omega(T)\text{, by Theorem \ref{ammt}}\\
    \notag &=\sum_b \rho^{-1}\frac{\pi_b}{d_b}\sqrt{n}\tau_b\\
    \label{eq:main1_1}&= \frac{\rho^{-1}\sqrt{n}}{\sum_w d_w\tau_w} \sum_b \tau_b^2\text{, by identity (\ref{mctt}).}
\end{align}

Then we compute the numerator in identity (\ref{pseudoinverse_trick_2}),\begin{align}
     \notag&(-1)^{u+v}\sum_{a\neq u}\det((M\xleftarrow{a}N)_{\bar{v},\bar{u}})\\
    \notag=&\sum_{a\neq u}\sum_{b\neq v}\sgn_V(u)\sgn_V(v)\sgn_{\bar{u}}(a)\sgn_{\bar{v}}(b)N(b,a)\det(M_{\overline{\{v,b\}},\overline{\{u,a\}}})\\
    \notag=&\sum_{a\neq u}\sum_{b\neq v}\sgn_V(u)\sgn_V(v)\sgn_{\bar{u}}(a)\sgn_{\bar{v}}(b)\rho^{-1}\frac{\pi_b}{d_b}\frac{1}{\sqrt{n}}\det(L_{\overline{\{v,b\}},\overline{\{u,a\}}})\\
    \label{eq:main1_2}=&\frac{\rho^{-1}}{\sqrt{n}\sum_w d_w\tau_w}\sum_{a\neq u}\sum_{b\neq v}\tau_b\sgn_V(u)\sgn_V(v)\sgn_{\bar{u}}(a)\sgn_{\bar{v}}(b)\det(L_{\overline{\{v,b\}},\overline{\{u,a\}}}),
\end{align} by using (\ref{mctt}) again.

For a fixed tuple of $a\neq u,b\neq v$, we have\begin{equation}
    \label{eq:main1_3} \det(L_{\overline{\{v,b\}},\overline{\{u,a\}}})=\sgn_V(\{v,b\})\sgn_V(\{u,a\})\sum_{F}\sgn(F)\omega(F)
\end{equation} where the $F$ runs over all $2$-forests rooted at $v,b$ with $u,a$ in different components. The sign $\sgn(F)$ is as described in Theorem \ref{ammt}.

{\it Claim:} When $F$ matches $u$ to $v$ and $a$ to $b$, \begin{equation}\label{sign1}
    \sgn_V(u)\sgn_V(v)\sgn_{\bar{u}}(a)\sgn_{\bar{v}}(b)\sgn_V(\{u,a\})\sgn_V(\{v,b\})\sgn(F)=1;
\end{equation}When $F$ matches $a$ to $v$ and $u$ to $b$,\begin{equation}\label{sign2}
    \sgn_V(u)\sgn_V(v)\sgn_{\bar{u}}(a)\sgn_{\bar{v}}(b)\sgn_V(\{u,a\})\sgn_V(\{v,b\})\sgn(F)=-1.
\end{equation}
To see this, if we assume $F$ matches $u$ to $v$ and $a$ to $b$, then $\sgn(F)$ only depends on the relative position of $u,v,a,b$ in the total order of $V$. Suppose, say, $u<v<a<b$, then by definition $\sgn(F)=1$, $\sgn_{\bar{u}}(a)=-\sgn_{V}(a)$ and $\sgn_{\bar{v}}(b)=-\sgn_{V}(b)$. Together with $\sgn_V(\{u,a\})=\sgn_V(u)\sgn_V(a)$ and $\sgn_V(\{v,b\})=\sgn_V(v)\sgn_V(b)$, we conclude (\ref{sign1}). The other case can be proved in a similar way, hence proving the claim.

By combining this sign pattern claim with identities (\ref{eq:main1_1}), (\ref{eq:main1_2}), and (\ref{eq:main1_3}),  we obtain (\ref{eq:main1_0}) as desired.
\end{proof}

\begin{proof}[Sketched proof of Theorem \ref{main2}]
Our goal is to establish the following identity:\begin{equation}
    \label{eq:main2_0}
    \mathcal{G}(u,v)= \frac{\sqrt{\pi_u\pi_v}}{\tau_v}\sum_{a\neq u}\sum_{b\neq v} \pi_a d_b(\hspace{-.12in}\sum_{\substack{T_1\cup T_2\in \mathbb{F}^*_2\\u\in T_1,r(T_1)=v\\a\in T_2,r(T_2)=b }}\hspace{-.12in}\omega(T_1\cup T_2)-\hspace{-.12in}\sum_{\substack{T_1\cup T_2\in \mathbb{F}^*_2\\a\in T_1,r(T_1)=v\\u\in T_2,r(T_2)=b}}\hspace{-.12in}\omega(T_1\cup T_2)).
\end{equation} Since for each rooted $2$-forest with $v$ being one of the roots, the choice of $a$ gives the term $\pi(T_1)$ or $\pi(T_2)$, this identity implies Theorem~\ref{main2}.

We follow the proof of Theorem \ref{main1} with two different choices. When we apply Lemma \ref{pseudoinverse_trick}, we choose $M=\mathcal{L}=\Pi^{\frac{1}{2}} D^{-1}L\Pi^{-\frac{1}{2}}$. Both the left and right unit null vectors of $M$ are $x=y=\pi^{\frac{1}{2}}$. By taking $N:=xy^*$ we have $N(b,a)=\sqrt{\pi_b\pi_a}$.

Another difference occurs when we take determinants of submatrices of $M$. Specifically, for a fixed tuple of $a\neq u,b\neq v$, we have\begin{align*}
    &\det(M_{\bar{b},\bar{a}})=(\prod_c \frac{1}{d_c})\frac{d_b}{\sqrt{\pi_b}}\sqrt{\pi_a} \det(L_{\bar{b},\bar{a}}),\\
    &\det(M_{\overline{\{v,b\}},\overline{\{u,a\}}})=(\prod_c \frac{1}{d_c})\frac{d_b}{\sqrt{\pi_b}}\frac{d_v}{\sqrt{\pi_v}}\sqrt{\pi_a}\sqrt{\pi_u}\det(L_{\overline{\{v,b\}},\overline{\{u,a\}}}).
\end{align*}

Using exactly the same arguments as in the proof of Theorem~\ref{main1}, we can conclude\begin{equation*}
    \mathcal{G}(u,v)=\frac{d_v}{(\sum_w d_w\tau_w)}\sqrt{\frac{\pi_u}{\pi_v}}\sum_{a\neq u}\sum_{b\neq v} \pi_a d_b(\hspace{-.12in}\sum_{\substack{T_1\cup T_2\in \mathbb{F}^*_2\\u\in T_1,r(T_1)=v\\a\in T_2,r(T_2)=b }}\hspace{-.12in}\omega(T_1\cup T_2)-\hspace{-.12in}\sum_{\substack{T_1\cup T_2\in \mathbb{F}^*_2\\a\in T_1,r(T_1)=v\\u\in T_2,r(T_2)=b}}\hspace{-.12in}\omega(T_1\cup T_2)).
\end{equation*}Then we apply (\ref{mctt}) to show (\ref{eq:main2_0}) as desired.
\end{proof}
\begin{remark}
One can apply (\ref{mctt}) to replace terms involving  $\pi$'s in (\ref{eq:main2_0}). The resulting forest formula for $\mathcal{G}$ will be slightly more complicated so that the enumeration depends only on the forests and the degrees of vertices.
\end{remark}
\begin{remark}
For $\mathcal{L}$, as the proof shows, the left null vector space coincides with the right null vector space. In this case, $\mathcal{L}\mathcal{G}=\mathcal{G}\mathcal{L}$ as they are both the projection operator onto the orthogonal complement of $\ker(\mathcal{L})$. Thus, $\mathcal{G}$ is also the group inverse of $\mathcal{L}$ (see \cite{chebotarev2002forest}). In general, we do not have $L\mathbf{G}=\mathbf{G}L$.
\end{remark}

When $\Gamma$ is a simple graph, all its subgraphs have weight $1$. Using $\Pi=\vol(\Gamma)^{-1}D$, identity (\ref{equal_intree_condition}), and free choices of roots, one can easily deduce Corollary \ref{main1_undirected} from Theorem \ref{main1}, Corollary \ref{trace2_undirected} from Corollary \ref{trace2}, and Corollary \ref{main2_undirected} from Theorem \ref{main2}. We briefly present the proofs of Corollary~\ref{trace1_undirected} and Corollary~\ref{trace2}.

\begin{proof}[Proof  of Corollary \ref{trace1_undirected}]
From Corollary \ref{main1_undirected}, we have
\begin{equation*}
    \mathbf{G}(u,u)=\frac{1}{n^2\tau}\sum_{\substack{T_1 \cup T_2 \in \mathbb{F}_2\\u\in T_1}}|T_2|^2.
\end{equation*}
Thus,
\begin{align*}
   \Tr(\mathbf{G})&=\sum_u \mathbf{G}(u,u)\\
   &=\frac{1}{n^2\tau}\sum_{\substack{T_1 \cup T_2 \in \mathbb{F}_2}}(\sum_{u \in T_1}|T_2|^2+\sum_{u \in T_2}|T_1|^2)
   \\
   &=\frac{1}{n^2\tau}\sum_{\substack{T_1 \cup T_2 \in \mathbb{F}_2}}(|T_1|~|T_2|^2+|T_1|^2~|T_2|)\\
   &=\frac{1}{n\tau}\sum_{\substack{T_1 \cup T_2 \in \mathbb{F}_2}}|T_1|~|T_2|\\
   &=\frac{1}{n\tau}\sum_{\substack{T_1 \cup T_2 \in \mathbb{F}^*_2}}1
   \\
    &=
   \frac 1 {n \tau} |\mathbb{F}^*_2|,
   \end{align*}as desired.
\end{proof}

\begin{proof}[Proof  of Corollary \ref{trace2}]
By Theorem \ref{main2},
\begin{align*} 
  \Tr (\mathcal{G})&=
  \sum_u \frac{d_u}{\sum_w d_w \tau_w}\sum_{\substack{T_1 \cup T_2 \in \mathbb{F}^*_2\\u=r( T_1)}} \pi(T_2)d_{r(T_2)}  \omega(T_1 \cup T_2)
\\
&=
  \sum_u \frac{1}{\sum_w d_w \tau_w}\sum_{\substack{T_1 \cup T_2 \in \mathbb{F}^*_2\\u=r( T_1)}} \pi(T_2)d_{r(T_1)}d_{r(T_2)}\omega(T_1 \cup T_2)
\\
&=\frac 1 {\sum_w d_w \tau_w}\sum_{\substack{T_1 \cup T_2 \in \mathbb{F}^*_2}}(\pi(T_2)+\pi(T_1))d_{r(T_1)} d_{r(T_2)}\omega(T_1 \cup T_2)
\\
&=\frac 1 {\sum_w d_w \tau_w} \sum_{T_1 \cup T_2 \in \mathbb{F}^*_2}d_{r(T_1)}d_{r(T_2)}\omega(T_1 \cup T_2),
\end{align*}as desired.
\end{proof}
\begin{example}
For the simple graph $\Gamma_1$ in Figure \ref{fig:intro_example}, the nontrivial eigenvalues of the combinatorial Laplacian
$L$ are $2,4,4$. Therefore the trace of the Green's function $\mathbf{G}$ is
\begin{equation*}
    \Tr(\mathbf{G})=\frac 1 2 + \frac 1 4 + \frac 1 4=1.
\end{equation*}
 It is easily checked that there are $8$ spanning trees and the number of $2$-rooted spanning forests is $32$ in $\Gamma$. The forest formula in Theorem \ref{trace1_undirected} states that 
 \begin{equation*}
    \Tr(\mathbf{G})=\frac 1 {n \tau} 32 =\frac {32}{4 \cdot 8} =1.
\end{equation*}
\end{example}
\begin{figure}[h]
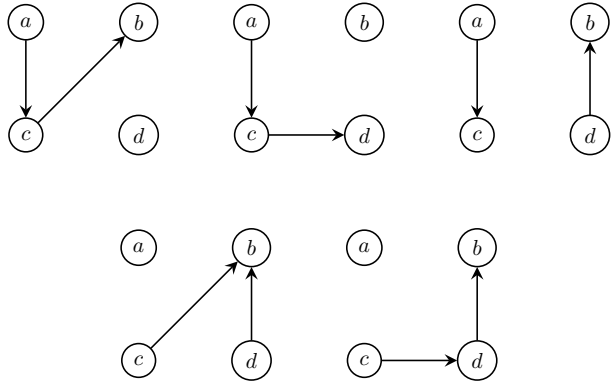

    \centering
    \scalebox{1.5}{\tikzfig{computation_example}}
    \caption{All rooted $2$-forests of $\Gamma_2$ with one root being $b$.}
    \label{fig:computation_example}
\end{figure}
\begin{example}
For the digraph $\Gamma_2$ in Figure~\ref{fig:intro_example}, let's label its vertices as in Figure~\ref{fig:computation_example} and assume each edge has unit weight. It's easy to check that $\tau_a=\tau_b=2$ and $\tau_c=\tau_d=1$. Figure~\ref{fig:computation_example} also lists all the rooted $2$-forests with one root being $b$, then we can read from Theorem \ref{main1} that\begin{equation*}
    \mathbf{G}(a,b)=\frac{1}{4(1^2+1^2+2^2+2^2)}(1-1-2-6-6)=-\frac{7}{20},
\end{equation*}which can also be checked computationally.
\end{example}

\section{Forest formula for hitting times and applications}\label{section_random_walk}
In this section, we will use the forest formula of the Green's function to examine several invariants arising in random walks on graphs or digraphs. First, we will derive the combinatorial expression for the hitting time $H(u,v)$, that is, the expected length of the random walk on $\Gamma$ starting at $u$ and ending at $v$. Using the forest formula of hitting times, we can give combinatorial proofs of old and new results related. For example, we prove the Kemeny's constant of a digraph with $n$ vertices is at least $\frac{n-1}{2}$, which is a tight lower bound achieved by directed cycles.

We state the connection between the Green's function $\mathcal{G}$ and the hitting time $H(u,v)$. The following identity was first established for undirected graphs in \cite{chung2000discrete} and was generalized to strongly-connected digraphs in \cite{boley2011commute}.
\begin{equation}
\label{hit1}
    H(u,v)=\frac{\mathcal{G}(v,v)}{\pi(v)}-\frac{\mathcal{G}(u,v)}{\sqrt{\pi(u)\pi(v)}}.
\end{equation}

The following hitting time formula is also obtained in \cite{chebotarev2020hitting} using group inverse techniques.
\begin{theorem} 
\label{hitting_time}
 In a graph $\Gamma$, we have the following formula for the hitting time $H(u,v)$.
\begin{description}
\item[(i)] If $\Gamma$ is a weighted strongly-connected digraph, then
\begin{equation*}
    H(u,v)=\frac 1 {\tau_v}\sum_{\substack{T_1\cup T_2\in \mathbb{F}^*_2\\u \in T_2, v=r(T_1)}} d_{r(T_2)}\omega(T_1\cup T_2).
\end{equation*} 
\item[(ii)]If additional to (i), $e=(v,u)$ is an edge, then
\begin{equation*}
    H(u,v)= \frac{1}{\tau_v w(e)} \sum_{T \in \mathbb{T}^*_{e}}d_{r(T)} \omega(T),
\end{equation*}
where $\mathbb{T}^*_{e}$ denotes the set of all in-trees containing the edge $e$.
\item[(iii)] If $\Gamma$ is a weighted  connected undirected graph, then
\begin{equation*}
    H(u,v)=\frac 1 {\tau}\sum_{\substack{T_1 \cup T_2 \in \mathbb{F}_2\\u \in T_2, v \in T_1}} \vol(T_2)\omega(T_1 \cup T_2).
\end{equation*}
\end{description}
\end{theorem}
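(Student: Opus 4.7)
The plan is to derive Part (i) by plugging the forest formula for $\mathcal{G}$ from Theorem \ref{main2} into the hitting-time identity (\ref{hit1}), and then obtain Parts (ii) and (iii) from Part (i) by short combinatorial arguments. The crucial fact enabling Part (i) is the normalization $\sum_z\pi_z=1$, which will make several coefficients collapse.

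For Part (i), I would first specialize Theorem \ref{main2} at $u=v$. The constraint $r(T_1)=v$ forces $v\in T_1$, so the second summand in the formula for $\mathcal{G}(v,v)$ vanishes, leaving $\mathcal{G}(v,v)=\frac{\pi_v}{\tau_v}\sum_{r(T_1)=v}\pi(T_2)\,d_{r(T_2)}\,\omega(T_1\cup T_2)$. Substituting this together with the general expression for $\mathcal{G}(u,v)$ into (\ref{hit1}), the prefactors $\pi_v$ and $\sqrt{\pi_u\pi_v}$ cancel, so $H(u,v)$ becomes $\frac{1}{\tau_v}(A-B+C)$ with
\begin{align*}
A &= \sum_{r(T_1)=v}\pi(T_2)\,d_{r(T_2)}\,\omega(T_1\cup T_2),\\
B &= \sum_{u\in T_1,\,r(T_1)=v}\pi(T_2)\,d_{r(T_2)}\,\omega(T_1\cup T_2),\\
C &= \sum_{u\in T_2,\,r(T_1)=v}\pi(T_1)\,d_{r(T_2)}\,\omega(T_1\cup T_2).
\end{align*}
Splitting $A$ according to whether $u$ lies in $T_1$ or $T_2$, the $u\in T_1$ contribution cancels $B$, and the remaining $u\in T_2$ piece of $A$ combines with $C$ to produce the factor $\pi(T_1)+\pi(T_2)=1$. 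Part (i) follows.

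For Part (ii), I would set up a weight-preserving bijection between spanning rooted $2$-forests $T_1\cup T_2$ with $r(T_1)=v$ and $u\in T_2$, and in-trees $T\in\mathbb{T}^*_{e}$ containing $e=(v,u)$. The forward map adjoins the edge $e$: since $v=r(T_1)$ has no outgoing edge in $T_1$, appending $e$ supplies its unique outgoing edge and merges the two components into an in-tree rooted at $r(T_2)$, while multiplying the weight by $\omega(e)$. The inverse removes the unique outgoing edge of $v$ in $T$, which must be $e$. Since $r(T)=r(T_2)$, this converts the formula in Part (i) into that in Part (ii) upon dividing by $\omega(e)$. For Part (iii), the undirected case, each tree can be rooted freely, so $\tau_v=\tau$ by (\ref{equal_intree_condition}), and every spanning rooted $2$-forest with $r(T_1)=v$ corresponds to an unrooted $2$-forest $T_1\cup T_2$ with $v\in T_1$ together with a choice of $r(T_2)\in T_2$. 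Summing over that last choice, $\sum_{r(T_2)\in T_2}d_{r(T_2)}=\vol(T_2)$, which turns Part (i) into Part (iii).

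The main obstacle is the algebraic bookkeeping in Part (i): correctly tracking which pieces of $A$ cancel against $B$, and recognizing the collapse $\pi(T_1)+\pi(T_2)=\sum_z\pi_z=1$ that removes all dependence on $\pi$ from the final answer. Part (ii) also requires attention to the orientation convention of in-trees, to verify that the cutting/gluing of $e=(v,u)$ genuinely produces a valid in-tree at each step; this reduces to the observation that the root is the unique vertex with no outgoing edge.
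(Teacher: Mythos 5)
Your proposal is correct and follows essentially the same route as the paper: substitute the forest formula of Theorem \ref{main2} into identity (\ref{hit1}), cancel the $u\in T_1$ terms and use $\pi(T_1)+\pi(T_2)=1$ to get (i), adjoin the edge $e=(v,u)$ to obtain (ii), and use (\ref{equal_intree_condition}) with free choices of roots (so that $\sum_{r(T_2)\in T_2} d_{r(T_2)}=\vol(T_2)$) for (iii). Your write-up just makes the bookkeeping and the cut-and-glue bijection for (ii) more explicit than the paper does.
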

\begin{proof}
From Theorem \ref{main2}, we have 
\begin{equation*}
    \frac{\mathcal{G}(v,v)}{\pi_v}=\frac 1 {\tau_v}\sum_{\substack{T_1\cup T_2\in \mathbb{F}^*_2\\v=r(T_1)}} \pi(T_2)d_{r(T_2)}\omega(T_1\cup T_2).
\end{equation*}
We also have
\begin{equation*}
    \frac{\mathcal{G}(u,v)}{\sqrt{\pi_u \pi_v}}=
    \frac 1 {\tau_v}\Big(\hspace{-.12in}\sum_{\substack{T_1\cup T_2\in \mathbb{F}^*_2\\u\in T_1,v=r(T_1)}}\hspace{-.12in} \pi(T_2)d_{r(T_2)}\omega(T_1\cup T_2)-\hspace{-.12in}\sum_{\substack{T_1\cup T_2\in \mathbb{F}^*_2\\u\in T_2, v=r(T_1)}}\hspace{-.12in} \pi(T_1)d_{r(T_2)}\omega(T_1\cup T_2)\Big).
\end{equation*} 
Since  $\pi(T_1)+\pi(T_2)=1$ when $T_1\cup T_2$ is spanning, we have
\begin{align*}
    H(u,v)&=\frac{\mathcal{G}(v,v)}{\pi_v}- \frac{\mathcal{G}(u,v)}{\sqrt{\pi_u\pi_v}}\\
   &= \frac 1 {\tau_v}\Big(\sum_{\substack{T_1\cup T_2\in \mathbb{F}^*_2\\u\in T_2,v=r(T_1)}} \pi(T_2)d_{r(T_2)}\omega(T_1\cup T_2)+\sum_{\substack{T_1\cup T_2\in \mathbb{F}^*_2\\u\in T_2, v=r(T_1)}} \pi(T_1)d_{r(T_2)}\omega(T_1\cup T_2)\Big)\\
   &=\frac 1 {\tau_v}\sum_{\substack{T_1\cup T_2\in \mathbb{F}^*_2\\u\in T_2,v=r(T_1)}} d_{r(T_2)}\omega(T_1\cup T_2),
    \end{align*} proving (i).

If $e=(v,u)$ is an edge, the above expression can be further simplified since
    $T_1 \cup T_2 \cup e$ forms an in-tree. We have
    \begin{equation*}
    H(u,v)=\frac 1 {\tau_v}\sum_{\substack{T_1\cup T_2\in \mathbb{F}^*_2\\u \in T_2, v=r(T_1)}} d_{r(T_2)}\frac{\omega(T_1\cup T_2\cup e)}{\omega(e)}=\frac 1 {\tau_v\omega(e)}\sum_{\substack{T \in \mathbb{T}^*_{e}}}d_{r(T)}w(T),
    \end{equation*}
proving (ii).

(iii) follows from (i) by  (\ref{equal_intree_condition}) and free choices of roots.
\end{proof}

The commute time $C(u,v)$ is the expected time for a random walk  starting from $u$, reaching $v$, and then returning to $u$. Namely,
\begin{equation*}
    C(u,v)=H(u,v)+H(v,u).
\end{equation*}
The following simple expression of the commute time is an immediate consequence of  Theorem \ref{hitting_time}. A forest expression for commute times of digraphs is still complicated. 
\begin{corollary}
If $\Gamma$ is a connected weighted undirected graph, then
  \begin{equation*}
    C(u,v)=\frac {\vol(\Gamma)} {\tau} \sum_{\substack{T_1 \cup T_2 \in \mathbb{F}_2\\ u \in T_1, v \in T_2}} w(T_1 \cup T_2)
\end{equation*}
If $e=\{u,v\}$ is an edge of $\Gamma$, then
\begin{equation*}
    C(u,v) = \frac {\tau_e}{\tau\omega(e)}\vol(\Gamma)
\end{equation*}
where $\tau_e$ denotes the total weight of all spanning trees containing $e$.
\end{corollary}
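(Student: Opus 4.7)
The plan is to start from the defining identity $C(u,v)=H(u,v)+H(v,u)$ and apply Theorem~\ref{hitting_time}(iii) to each of the two hitting times. For $H(u,v)$ the theorem gives a sum over spanning $2$-forests $T_1\cup T_2$ with $u\in T_2$ and $v\in T_1$, weighted by $\vol(T_2)\omega(T_1\cup T_2)$; for $H(v,u)$ the roles of $u$ and $v$ are swapped. Because (as the authors explicitly note) the summations over $\mathbb{F}_2$ ignore the ordering of the two components, I would relabel the two trees in the expression for $H(v,u)$ so that both sums run over the \emph{same} index set, namely forests with $u\in T_1$ and $v\in T_2$. In that common parametrization the weight attached by $H(u,v)$ becomes $\vol(T_1)\omega(T_1\cup T_2)$ and the weight attached by $H(v,u)$ becomes $\vol(T_2)\omega(T_1\cup T_2)$.

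Adding the two contributions, the combined weight on each forest is
\begin{equation*}
\bigl(\vol(T_1)+\vol(T_2)\bigr)\,\omega(T_1\cup T_2)=\vol(\Gamma)\,\omega(T_1\cup T_2),
\end{equation*}
where the equality uses that $T_1\cup T_2$ is spanning, so the two component volumes partition $\vol(\Gamma)$. Factoring $\vol(\Gamma)$ out of the sum yields the first displayed formula for $C(u,v)$.

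For the second formula, assume $e=\{u,v\}\in E(\Gamma)$. The key observation is that adjoining $e$ to any spanning $2$-forest $T_1\cup T_2$ with $u\in T_1$ and $v\in T_2$ produces a spanning tree of $\Gamma$ containing $e$, and conversely every spanning tree containing $e$ decomposes uniquely into such a $2$-forest by deleting $e$. This map is a weight-preserving bijection up to the factor $\omega(e)$: $\omega(T_1\cup T_2)=\omega(T)/\omega(e)$ for the corresponding tree $T$. Therefore
\begin{equation*}
\sum_{\substack{T_1\cup T_2\in\mathbb{F}_2\\ u\in T_1,\,v\in T_2}}\omega(T_1\cup T_2)=\frac{1}{\omega(e)}\sum_{T\ni e}\omega(T)=\frac{\tau_e}{\omega(e)},
\end{equation*}
and substituting this into the first formula gives the second.

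I do not expect a real obstacle; the only point that requires care is the bookkeeping when relabeling $T_1\leftrightarrow T_2$ to merge the two hitting-time sums, and checking that the bijection in the edge case is weight-compatible. Both are routine once the conventions from Section~\ref{pre} are in hand.
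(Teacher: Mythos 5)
Your proof is correct and is exactly the ``immediate consequence'' argument the paper intends: sum the two forest expressions from Theorem~\ref{hitting_time}(iii) over a common parametrization so the weights combine to $\vol(T_1)+\vol(T_2)=\vol(\Gamma)$, then for the edge case use the weight-compatible bijection (also used in the paper for Theorem~\ref{hitting_time}(ii) and Corollary~\ref{eff}) between such $2$-forests and spanning trees containing $e$. No gaps.
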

The return time is the expected length of a random walk to return to its starting vertex. The following is a classical result about return time. We present a new proof similar to but simpler than \cite{xu2013discrete}Theorem~4.5.
\begin{corollary}\label{return_time}
If $v$ is a vertex of a strongly-connected weighted digraph $\Gamma$, then the expected return time $R(v)$, with starting vertex $v$, is $\frac{\sum_w d_w\tau_w}{d_v\tau_v}=\pi_v^{-1}$.
\end{corollary}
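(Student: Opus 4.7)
The plan is to start from the one-step decomposition of the return time,
$$R(v) = 1 + \sum_{u} P(v,u) H(u,v) = 1 + \frac{1}{d_v} \sum_{u : (v,u) \in E} \omega(v,u) H(u,v),$$
and substitute the forest expression from Theorem \ref{hitting_time}(i). After pulling $\frac{1}{d_v \tau_v}$ out front, the goal reduces to evaluating a sum of the form
$$\sum_{u : (v,u)\in E} \omega(v,u) \sum_{\substack{T_1 \cup T_2 \in \mathbb{F}^*_2 \\ v = r(T_1),\ u \in T_2}} d_{r(T_2)} \omega(T_1 \cup T_2).$$

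The key combinatorial step is a weight-preserving bijection. Given a rooted spanning $2$-forest $T_1 \cup T_2$ with $v = r(T_1)$ together with an edge $(v,u)$ where $u \in T_2$, attach this edge to the forest. Since $v$ was a root (hence had no out-edge in $T_1$) and every vertex of $T_2$ already has a directed path to $r(T_2)$, the resulting digraph is a spanning in-tree rooted at $r(T_2)$ in which $v$'s unique outgoing edge is $(v,u)$. Conversely, in any rooted spanning tree $T$ with $r(T) \neq v$, the vertex $v$ has exactly one outgoing edge $(v,u)$, and removing it splits $T$ into a subtree rooted at $v$ and a subtree containing $r(T)$. Under this bijection, the weights multiply as $\omega(T_1 \cup T_2) \cdot \omega(v,u) = \omega(T)$ and $d_{r(T_2)} = d_{r(T)}$.

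Applying the bijection converts the inner sum into $\sum_{w \neq v} d_w \tau_w$, so that
$$R(v) = 1 + \frac{1}{d_v \tau_v} \sum_{w \neq v} d_w \tau_w = \frac{\sum_w d_w \tau_w}{d_v \tau_v}.$$
Finally, invoking the Markov chain tree theorem (\ref{mctt}) identifies this quantity as $\pi_v^{-1}$.

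I expect the main obstacle to be verifying the bijection is truly well-defined, in particular checking that once edge $(v,u)$ is attached to the $2$-forest, the resulting structure is indeed an in-tree with the correct root (rather than, say, a forest containing a directed cycle), and that the inverse procedure recovers a valid rooted $2$-forest with $v$ as a root. Both checks follow cleanly from the fact that $v$ has no outgoing edge in $T_1$ and that $T_2$ has a unique root to which $u$ already has a directed path; once these are laid out, the rest is bookkeeping.
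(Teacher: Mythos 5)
Your proposal is correct and follows essentially the same route as the paper: one-step conditioning on the first move, substitution of the forest formula from Theorem \ref{hitting_time}(i), the observation that attaching the edge $(v,u)$ turns the rooted $2$-forest into a spanning in-tree rooted at $r(T_2)$ (each such in-tree arising exactly once), and a final appeal to the Markov chain tree theorem \eqref{mctt}. The only difference is that you spell out the bijection's well-definedness and inverse, which the paper compresses into a single sentence.
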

\begin{proof}
Write $N^+_v:=\{u\in V; (v,u)\in E\}$. Using Theorem \ref{hitting_time}, the return time is
\begin{align*}
   R(v)&= 1+\sum_{u\in N^+_v} \frac{\omega(v,u)}{d_v} H(u,v)\\
   &=1+\frac{1}{d_v\tau_v}\sum_{b\neq v} d_b\sum_{\substack{T_1\cup T_2\in \mathbb{F}^*_2\\v=r(T_1),b=r(T_2)}} \sum_{u\in N^+_v\cap T_2} \omega(T_1\cup T_2\cup (v,u)).
\end{align*}It suffices to note that $T_1\cup T_2\cup (v,u)$ forms an in-tree with root $b$ and each such in-tree appears in the summation of RHS exactly once.
Thus, we have
\begin{align*}
    R(v)&=1+\frac{\sum_{b\not = v} d_b \tau_b}{d_v\tau_v}\\
    &= \frac{\sum_b d_b\tau_b}{d_v\tau_v}=\pi_v^{-1},
\end{align*}by identity \eqref{mctt} as wanted.
\end{proof}

Kemeny’s constant is the expected number of  steps required for a random walk starting from a specified vertex to reach a random vertex sampled from the stationary distribution. Surprisingly, this quantity does not depend on which starting state is chosen (\cite{kemeny}, see also \cite{lyons2017probability}Exercise~2.48).
\begin{corollary}
\label{kemeny_trace}
In a graph $\Gamma$, Kemeny's constant $\kappa=\sum_v H(u,v) \pi_v$
  satisfies the following:
\begin{description}
\item[(i)] If $\Gamma$ is a weighted strongly-connected digraph, then
\begin{equation*}
   \kappa=\frac{1}{\sum_wd_w\tau_w}\sum_{\substack{T_1\cup T_2 \in \mathbb{F}^*_2}}d_{r(T_1)}d_{r(T_2)}\omega(T_1\cup T_2)=\Tr(\mathcal{G}).
\end{equation*} 
In particular this quantity is independent of the choice of $u$.
\item[(ii)] If $\Gamma$ is a weighted connected undirected graph, then
\begin{equation*}
    \kappa=\frac{1}{\vol(\Gamma)\tau}\sum_{\substack{T_1\cup T_2 \in \mathbb{F}_2}}\vol(T_1)\vol(T_2)\omega(T_1\cup T_2)=\Tr(\mathcal{G}).
\end{equation*}
\end{description}
\end{corollary}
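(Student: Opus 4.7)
The plan for part (i) is to substitute the hitting time formula of Theorem \ref{hitting_time}(i) into the definition $\kappa = \sum_v \pi_v H(u,v)$ and then apply the Markov chain tree theorem (\ref{mctt}) to eliminate $\pi_v$. Starting from
\[
\kappa \;=\; \sum_v \pi_v \cdot \frac{1}{\tau_v} \sum_{\substack{T_1\cup T_2\in \mathbb{F}^*_2\\ u\in T_2,\, v=r(T_1)}} d_{r(T_2)}\,\omega(T_1\cup T_2),
\]
I would write $\pi_v = d_v\tau_v/(\sum_w d_w\tau_w)$ so that the factor $\tau_v$ in the denominator cancels cleanly, leaving a global $1/\sum_w d_w\tau_w$ outside and a factor $d_v$ inside. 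Since the inner constraint forces $v=r(T_1)$, this inner factor is $d_{r(T_1)}$.

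Next I would swap the order of summation: summing first over $v$ and then over forests is equivalent to summing over all $T_1\cup T_2\in \mathbb{F}^*_2$ with $u\in T_2$ (each such forest contributes at its unique choice $v=r(T_1)$). This yields
\[
\kappa \;=\; \frac{1}{\sum_w d_w\tau_w} \sum_{\substack{T_1\cup T_2\in \mathbb{F}^*_2\\ u\in T_2}} d_{r(T_1)}\, d_{r(T_2)}\,\omega(T_1\cup T_2).
\]
The key observation, and the only conceptual step, is that the summand is now symmetric under exchange of $T_1$ and $T_2$. Because the paper's convention treats $T_1\cup T_2$ as an unordered decomposition, the side constraint ``$u\in T_2$'' merely fixes a labeling of the two components of each unordered forest: every rooted $2$-forest is counted exactly once, regardless of which component happens to contain $u$. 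Dropping the (now vacuous) restriction simultaneously proves that $\kappa$ does not depend on $u$ and identifies the expression with $\Tr(\mathcal{G})$ via Corollary \ref{trace2}.

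For part (ii), I would simply deduce the undirected formula from (i). By (\ref{equal_intree_condition}) we have $\tau_w=\tau$ for every $w$, so the normalization becomes $\tau\,\vol(\Gamma)$. Each unrooted spanning $2$-forest $T_1\cup T_2\in \mathbb{F}_2$ gives rise to $|T_1|\cdot|T_2|$ rooted versions in $\mathbb{F}^*_2$, and summing $d_{r(T_1)}\,d_{r(T_2)}$ over the possible roots in each component yields exactly $\vol(T_1)\vol(T_2)$. Substituting into the formula from (i) gives the stated expression, which equals $\Tr(\mathcal{G})$ by Corollary \ref{trace2_undirected}. The only step requiring care is the symmetry argument that causes the $u$-dependence to collapse; everything else is a bookkeeping exercise.
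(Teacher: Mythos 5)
Your proposal is correct and follows essentially the same route as the paper: substitute the forest formula for $H(u,v)$ from Theorem \ref{hitting_time}(i), use the Markov chain tree theorem (\ref{mctt}) to cancel $\tau_v$, observe that summing over $v$ makes the restricted forest sum collapse to the full sum over $\mathbb{F}^*_2$ (hence $u$-independence), and identify the result with $\Tr(\mathcal{G})$ via Corollary \ref{trace2}, with (ii) deduced from (i) by (\ref{equal_intree_condition}) and free choice of roots. Your explicit symmetry remark about the constraint $u\in T_2$ being a mere labeling is just a spelled-out version of the paper's final summation step, so no substantive difference.
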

\begin{proof}
We have, for any vertex $u$,
\begin{align*}
\kappa &= \sum_v H(u,v) \pi_v \\
&= \sum_v 
\frac {\pi_v} {\tau_v} \sum_{\substack{T_1\cup T_2\in \mathbb{F}^*_2\\u \in T_2, v=r(T_1)}} d_{r(T_2)}\omega(T_1\cup T_2) 
\\
&=\sum_v 
\frac 1 {\sum_w d_w \tau_w} \sum_{\substack{T_1\cup T_2\in \mathbb{F}^*_2\\u \in T_2, v=r(T_1)}} d_vd_{r(T_2)}\omega(T_1\cup T_2) 
\\
&=\frac 1 {\sum_w d_w \tau_w} \sum_{\substack{T_1\cup T_2\in \mathbb{F}^*_2}} d_{r(T_1)}d_{r(T_2)}\omega(T_1 \cup T_2)=\Tr(\mathcal{G}),
\end{align*} where the last line is based on Corollary \ref{trace2}.

(ii) follows from (i) by (\ref{equal_intree_condition}) and free choices of roots.
\end{proof}
The forest expression of Kemeny's constant was mentioned in \cite{chebotarev2020hitting} but the equality to the trace of the normalized Green's function was not.

The following is an analogue of Corollary \ref{kemeny_trace} for the combinatorial Green's function.
\begin{corollary}\label{kemeny_trace2}
If $\Gamma$ is a connected simple graph, we have\begin{equation*}
    \sum_u\sum_v H(u,v)=\frac{\vol(\Gamma)}{\tau}\sum_{T_1\cup T_2\in \mathbb{F}_2}|T_1||T_2|=\vol(\Gamma)n\Tr(\mathbf{G}).
    \end{equation*}
    \end{corollary}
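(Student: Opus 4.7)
The plan is to derive the first equality directly from Theorem~\ref{hitting_time}(iii) and then pass to the second via Corollary~\ref{trace1_undirected}. Specializing the hitting-time formula to a simple graph (so every $\omega(T_1\cup T_2)=1$) gives
\[
H(u,v)=\frac{1}{\tau}\sum_{\substack{T_1\cup T_2\in \mathbb{F}_2\\ u\in T_2,\, v\in T_1}} \vol(T_2).
\]
I would then interchange summation in $\sum_u\sum_v H(u,v)$, fixing an unordered spanning $2$-forest with components $A$ and $B$ and counting its total contribution. The pairs $(u,v)$ with $u\in A$ and $v\in B$ correspond to the labeling $T_2=A,\,T_1=B$ and contribute $|A|\,|B|\,\vol(A)$; the pairs with $u\in B$ and $v\in A$ contribute $|A|\,|B|\,\vol(B)$; pairs in the same component contribute zero.

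The key algebraic observation is that $\vol(A)+\vol(B)=\vol(\Gamma)$ whenever $A\cup B$ is a spanning $2$-forest. This collapses the two contributions into $|A|\,|B|\,\vol(\Gamma)$ and yields
\[
\sum_u\sum_v H(u,v) \;=\; \frac{\vol(\Gamma)}{\tau}\sum_{T_1\cup T_2\in \mathbb{F}_2}|T_1|\,|T_2|,
\]
which is the first claimed equality.

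For the second equality I would invoke Corollary~\ref{trace1_undirected}, which gives $\Tr(\mathbf{G})=\frac{1}{n\tau}|\mathbb{F}^*_2|$, together with the elementary bijective observation that a spanning rooted $2$-forest is an unrooted spanning $2$-forest equipped with an independent choice of root in each of its two components, so that $|\mathbb{F}^*_2|=\sum_{T_1\cup T_2\in \mathbb{F}_2}|T_1|\,|T_2|$. Substituting this back yields $\vol(\Gamma)\,n\,\Tr(\mathbf{G})=\frac{\vol(\Gamma)}{\tau}\sum_{T_1\cup T_2\in \mathbb{F}_2}|T_1|\,|T_2|$, matching the expression obtained above.

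The whole argument is essentially a bookkeeping computation with no serious obstacle. The only point requiring care is the convention that $\mathbb{F}_2$ treats its two components as unordered: when sweeping $u$ and $v$ over the vertex set one must be careful to account for both ways of assigning the labels $T_1$ and $T_2$ to a fixed unordered pair of components, which is precisely what produces the symmetric sum $\vol(A)+\vol(B)$ needed to invoke the identity $\vol(A)+\vol(B)=\vol(\Gamma)$.
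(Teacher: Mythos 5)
Your proposal is correct and follows essentially the same route as the paper: both start from Theorem~\ref{hitting_time}(iii), interchange the order of summation over $(u,v)$ and over spanning $2$-forests so that the two labelings of a fixed unordered forest produce $|T_1||T_2|(\vol(T_1)+\vol(T_2))=|T_1||T_2|\vol(\Gamma)$, and then identify $\sum_{T_1\cup T_2\in\mathbb{F}_2}|T_1||T_2|=|\mathbb{F}^*_2|$ to invoke Corollary~\ref{trace1_undirected}. No gaps; your explicit remarks about the unordered convention and the vanishing of same-component pairs are exactly the bookkeeping the paper performs implicitly.
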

\begin{proof}
By Theorem \ref{hitting_time}, we have
\begin{align*}
\sum_u\sum_v H(u,v)&= \sum_u\sum_v\frac 1 {\tau}\sum_{\substack{T_1\cup T_2\in \mathbb{F}_2\\u \in T_2, v\in T_1}}\vol(T_2)\\
&=\frac1{\tau} \sum_{\substack{T_1\cup T_2\in \mathbb{F}_2}} (\sum_{u\in T_2}\sum_{v\in T_1}\vol(T_2)+\sum_{u\in T_1}\sum_{v\in T_2} \vol(T_1))\\
&=\frac{1}{\tau}\sum_{T_1\cup T_2\in \mathbb{F}_2}|T_1||T_2|(\vol(T_1)+\vol(T_2))\\
&=\frac{\vol(\Gamma)}{\tau}\sum_{T_1\cup T_2\in \mathbb{F}_2}|T_1||T_2|\\
&=\frac{\vol(\Gamma)}{\tau}|\mathbb{F}^*_2|
=\vol(\Gamma)n\Tr(\mathbf{G}),
\end{align*}
by Corollary~\ref{trace1_undirected}.
\end{proof}

We can use the forest formula of the hitting time to establish the following tight lower bound for Kememy's constant.
\begin{theorem}
\label{kemeny_lower}
For a strongly-connected weighted digraph $\Gamma$, its Kemeny's constant is at least $\frac{n-1}{2}$ with equality if and only if $\Gamma$ is a directed cycle.
\end{theorem}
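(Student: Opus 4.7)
The plan is to combine the forest expression of Kemeny's constant from Corollary~\ref{kemeny_trace}(i) with a ``cut-an-edge'' resummation of $(n-1)Z$, where $Z := \sum_w d_w\tau_w$. Corollary~\ref{kemeny_trace}(i) reads $\kappa Z = \sum_{F\in\mathbb{F}^*_2} d_{r(T_1)}d_{r(T_2)}\omega(F)$, and the definition of $\tau_w$ gives $Z = \sum_{T\in\mathbb{F}^*_1} d_{r(T)}\omega(T)$. Hence $(n-1)Z = \sum_T \sum_{u\ne r(T)} d_{r(T)}\omega(T)$, since each rooted spanning tree has exactly $n-1$ non-root vertices. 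The key combinatorial move is that cutting the unique out-edge $e = (u, p_T(u))$ at each non-root $u$ maps the pair $(T, u)$ to a rooted spanning $2$-forest $F = T\setminus e$ with roots $r(T)$ and $u$, satisfying $\omega(T) = \omega(F)\omega(e)$. Conversely, each $F\in\mathbb{F}^*_2$ with roots $r_1, r_2$ and components $T_1\ni r_1, T_2\ni r_2$ is the image of such a pair via adding either an edge $(r_2, w)$ with $w\in T_1$ (giving $r(T)=r_1$, $u=r_2$) or an edge $(r_1, w)$ with $w\in T_2$ (giving $r(T)=r_2$, $u=r_1$).

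Regrouping $(n-1)Z=\sum_{(T,u)} d_{r(T)}\omega(e)\omega(F)$ by image therefore yields
\begin{equation*}
(n-1)Z = \sum_{F\in\mathbb{F}^*_2} \omega(F)\Big[d_{r(T_1)}\sum_{w\in T_1}\omega(r(T_2),w) + d_{r(T_2)}\sum_{w\in T_2}\omega(r(T_1),w)\Big].
\end{equation*}
Since $T_1, T_2\subseteq V$, the inner sums are bounded by $d_{r(T_2)}$ and $d_{r(T_1)}$ respectively, so the bracketed quantity is at most $2\,d_{r(T_1)}d_{r(T_2)}$. Summing over $F$ yields $(n-1)Z\leq 2\kappa Z$, i.e., $\kappa\geq (n-1)/2$. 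For the directed $n$-cycle one has $Z = n$, there are $\binom{n}{2}$ rooted spanning $2$-forests each of unit weight with $d_{r(T_1)} d_{r(T_2)}=1$, so $\kappa = \binom{n}{2}/n = (n-1)/2$, confirming tightness.

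The per-forest inequality is tight exactly when, for every $F\in\mathbb{F}^*_2$ with $\omega(F)>0$, both roots send all their out-weight into the opposite component; equivalently, neither root has an out-edge within its own component. The plan is to deduce that this forces each vertex of $\Gamma$ to have a single out-neighbor: if $v$ had two distinct out-neighbors $u_1, u_2$, then strong connectivity should allow the construction of a rooted spanning $2$-forest in which $v$ is a root and $u_1$ lies in $v$'s own component---typically by taking a shortest $u_1\to v$ path as $T_1$ and a spanning in-tree of the complement as $T_2$---contradicting equality. Once every vertex has out-degree one, strong connectivity forces $\Gamma$ to be a single directed $n$-cycle.

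The main obstacle lies in this equality construction: for an arbitrary strongly connected $\Gamma$ that is not a cycle, producing the witnessing $2$-forest requires the complement of the chosen path to itself admit a spanning in-tree, which is not automatic and may call for a secondary construction in degenerate cases. A clean shortcut is to invoke $\kappa = \Tr(\mathcal{G}) = \sum_{i=1}^{n-1}(1-\lambda_i)^{-1}$ where $\lambda_i$ are the non-trivial eigenvalues of $P$: for any $|\lambda|\leq 1$ one has $\Re\bigl((1-\lambda)^{-1}\bigr) \geq 1/2$ with equality iff $|\lambda|=1$, and a Frobenius-norm argument ($\sum_i|\lambda_i|^2 \leq \|P\|_F^2 = \sum_{i,j} P_{ij}^2 \leq n$, with equality throughout forcing every $P_{ij}\in\{0,1\}$) forces $P$ to be a permutation matrix, which by strong connectivity must correspond to a single $n$-cycle.
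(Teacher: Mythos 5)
Your proof of the inequality is essentially the paper's argument in a different packaging. Writing $Z=\sum_w d_w\tau_w=\sum_{T\in\mathbb{F}^*_1}d_{r(T)}\omega(T)$ and cutting the out-edge of each non-root vertex of an in-tree is exactly the correspondence the paper uses in the other direction (each in-tree rooted at $w$ arises exactly $n-1$ times by adding an edge from the second root of a rooted $2$-forest into the component of $w$), and bounding the total weight of edges from a root into the opposite component by that root's full out-degree is the same estimate; so $(n-1)Z\le 2\kappa Z$, hence $\kappa\ge\frac{n-1}{2}$, is correct and matches the paper, as does your verification for the directed cycle.

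The gap is exactly where you flag it: the combinatorial equality case. Your proposed witness (a shortest $u_1\to v$ path as $T_1$ together with a spanning in-tree of the complement as $T_2$) need not exist, since a vertex off the path may send all of its out-edges into the path, so $V\setminus T_1$ can fail to carry any spanning in-tree. The paper's construction sidesteps this: take any in-tree $T$ rooted at $v$, let $w$ be whichever of the two out-neighbors $u_1,u_2$ is farther from $v$ in $T$, and delete the out-edge of $w$ in $T$; this always produces a rooted spanning $2$-forest with roots $w$ and $v$ in which the nearer out-neighbor lies in $v$'s own component, violating your per-forest tightness condition. Your spectral fallback is a genuinely different route and is sound in outline: $\Re\bigl((1-\lambda)^{-1}\bigr)\ge\frac12$ for $|\lambda|\le 1$ with equality iff $|\lambda|=1$, then Schur's inequality $\sum_i|\lambda_i|^2\le\|P\|_F^2\le n$ with equality forcing one nonzero entry (equal to $1$) per row, and strong connectivity of the resulting functional digraph forcing a single cycle (note this last step, not the permutation-matrix claim, is what you actually need, since a $0$--$1$ stochastic matrix need not be a permutation matrix). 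However, the step $\Tr(\mathcal{G})=\sum_{i=1}^{n-1}(1-\lambda_i)^{-1}$ is not automatic: for a non-normal matrix the trace of the Moore--Penrose pseudoinverse is in general not the sum of reciprocals of the nonzero eigenvalues. It does hold here because $\mathcal{G}$ coincides with the group inverse of $\mathcal{L}$ (the paper's remark following the proof of Theorem \ref{main2}), or equivalently you can invoke the classical eigenvalue formula for Kemeny's constant; either way this justification must be supplied, after which your shortcut gives a complete, if spectral rather than forest-theoretic, proof of the equality characterization.
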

\begin{proof}
We rewrite $\kappa$ in Corollary \ref{kemeny_trace} as
\begin{equation*}
\sum_v H(u,v)\pi(v)=\frac{1}{2\sum_w d_w\tau_w}\sum_{w}\sum_{v}\sum_{\substack{T_1\cup T_2\in \mathbb{F}^*_2\\w=r(T_1), v=r(T_2)}}d_wd_v\omega(T_1\cup T_2),
\end{equation*}
where the factor $\frac{1}{2}$ is because of the latter summation running over each rooted $2$-forest twice. 

It suffices to show that for any $w$,
\begin{equation*}
\sum_{v}\sum_{\substack{T_1\cup T_2\in \mathbb{F}^*_2\\w=r(T_1), v=r(T_2)}}d_wd_v\omega(T_1\cup T_2) \geq (n-1)d_w\tau_w.
\end{equation*}
 For a $2$-forest $T_1\cup T_2$, let $E(r(T_2), T_1)$ denote the set of edges 
 $(r(T_2),z)$ with $z \in T_1$. We have
 \begin{align*}
 &\sum_{v}\sum_{\substack{T_1\cup T_2\in \mathbb{F}^*_2\\w=r(T_1), v=r(T_2)}}d_wd_v\omega(T_1\cup T_2) \\
 \geq&\sum_{v}\sum_{\substack{T_1\cup T_2\in \mathbb{F}^*_2\\w=r(T_1), v=r(T_2)}}\sum_{e\in E(r(T_2), T_1)}d_w\omega(T_1\cup T_2 \cup e) \\
 =&(n-1)d_w\tau_w,
\end{align*} 
since  $T_1\cup T_2\cup e$ forms an in-tree  with root $w$ and  each in-tree $T$ with root $w$
appears in the last summation of above inequality exactly $n-1$ times based on the fact that  deleting each   of the $n-1$ edges of $T$ results a rooted $2$-forest with a unique  edge $e$.

From the above proof, the equality holds if and only if for any $2$-forest $T_1\cup T_2$ rooted at any pair $(w,v)$, all edges leaving $v$ are in $E(r(T_2), T_1)$. This is equivalent to each vertex having only one out-edge. Indeed, suppose $(v,w_1),(v,w_2)$ both present in $\Gamma$. Take an in-tree $T$ with root $v$ and let $w$ be the one in $\{w_1,w_2\}$ having a longer distance to $v$ in $T$ and $w'$ the other. Delete the edge $e$ starting at $w$ in $T$, we obtain a rooted $2$-forest $T_1\cup T_2$ rooted at pair $(w,v)$ with $(v,w')\not \in E(r(T_2), T_1)$. Finally, a strongly-connected digraph with each vertex having only one out-edge must be a directed cycle.
\end{proof}

In a graph representing an electrical network with each edge representing a unit resistor (or a resistor of weight $1/w_e$). For an edge $(u,v)$, the {\it effective resistance} $R_{e \hspace{-.02in}f\hspace{-.02in}f}(u,v)$ is the potential difference induced by injecting a unit current into a vertex $u$ and extracting a unit current from another vertex $v$. 
 The effective resistance can be used to give general bounds to the hitting time (see \cite{chandra1996electrical}). To be precise, $R_{e \hspace{-.02in}f\hspace{-.02in}f}(u,v)$ can be computed by using the Green's function $\mathbf{G}$:
\begin{equation*}
    R_{e \hspace{-.02in}f\hspace{-.02in}f}(u,v)=\mathbf{G}(u,u)+ \mathbf{G}(v,v)- \mathbf{G}(u,v)- \mathbf{G}(v,u).
\end{equation*}
\begin{corollary}\label{eff}
In a connected weighted undirected graph $\Gamma$ with an edge $e=\{u,v\}$, we have 
\begin{equation*}
    R_{e \hspace{-.02in}f\hspace{-.02in}f}(u,v)=\frac{\tau_{e}}{\tau\omega(e)}.
    \end{equation*}
\end{corollary}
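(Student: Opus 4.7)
The plan is to apply the identity $R_{e\hspace{-.02in}f\hspace{-.02in}f}(u,v)=\mathbf{G}(u,u)+\mathbf{G}(v,v)-\mathbf{G}(u,v)-\mathbf{G}(v,u)$ stated just above the statement, and substitute the weighted-undirected analogue of Corollary~\ref{main1_undirected},
\[
\mathbf{G}(u,v)=\frac{1}{n^2\tau}\Big(\!\sum_{\substack{T_1\cup T_2\in\mathbb{F}_2\\u,v\in T_1}}\!|T_2|^2\omega(T_1\cup T_2)-\!\!\sum_{\substack{T_1\cup T_2\in\mathbb{F}_2\\u\in T_1,\,v\in T_2}}\!|T_1||T_2|\omega(T_1\cup T_2)\Big),
\]
obtained from Theorem~\ref{main1} using (\ref{equal_intree_condition}) and the freedom to choose roots (as noted in the excerpt right after the remarks following Theorem~\ref{main2}). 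Since $L$ is symmetric, so is $\mathbf{G}$, and the target reduces to $\mathbf{G}(u,u)+\mathbf{G}(v,v)-2\mathbf{G}(u,v)$.

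Next I would reorganize the combined expression as a single sum indexed by $F=T_1\cup T_2\in\mathbb{F}_2$ and split into two cases based on how $F$ separates $u$ and $v$. If $u,v$ lie in the same component of $F$, with that component of size $a$ and the other of size $b$, then each of the three terms contributes a coefficient $b^2\omega(F)$, giving the cancellation $b^2+b^2-2b^2=0$. If $F$ separates $u$ from $v$ (with $u$ in a component of size $a$ and $v$ in the other, of size $b$), the contributions to $\mathbf{G}(u,u),\ \mathbf{G}(v,v),\ -2\mathbf{G}(u,v)$ become $b^2\omega(F),\ a^2\omega(F),\ +2ab\,\omega(F)$ respectively (the last using the negative sign in front of the second sum above), which total $(a+b)^2\omega(F)=n^2\omega(F)$. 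The factor $n^2$ exactly absorbs the $1/(n^2\tau)$ prefactor, leaving
\[
R_{e\hspace{-.02in}f\hspace{-.02in}f}(u,v)=\frac{1}{\tau}\!\!\sum_{\substack{F\in\mathbb{F}_2\\F\text{ separates }u,v}}\!\!\omega(F).
\]

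Finally I would close the proof via the standard bijection $F\mapsto F\cup\{e\}$ between spanning $2$-forests separating $u$ from $v$ and spanning trees of $\Gamma$ containing $e=\{u,v\}$. Since $\omega(F\cup\{e\})=\omega(F)\,\omega(e)$, this gives $\sum_{F\text{ separates}}\omega(F)=\tau_e/\omega(e)$, yielding the claimed $R_{e\hspace{-.02in}f\hspace{-.02in}f}(u,v)=\tau_e/(\tau\omega(e))$. The only real obstacle is the bookkeeping in the separated case: recognizing the perfect-square collapse $b^2+a^2+2ab=n^2$ that precisely matches the $1/n^2$ prefactor of the forest formula for $\mathbf{G}$ and delivers a clean enumerative answer; everything else is the routine weighted extension of Corollary~\ref{main1_undirected} and a standard edge-deletion correspondence.
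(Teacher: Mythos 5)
Your proposal is correct and follows essentially the same route as the paper: substitute the weighted version of Corollary~\ref{main1_undirected} into the effective-resistance identity, observe that the contributions of $2$-forests keeping $u$ and $v$ together cancel while the separating forests collapse via $|T_1|+|T_2|=n$, and finish with the bijection $F\mapsto F\cup\{e\}$ onto spanning trees containing $e$. The only cosmetic difference is that the paper cancels in the pairings $\mathbf{G}(v,v)-\mathbf{G}(u,v)$ and $\mathbf{G}(u,u)-\mathbf{G}(v,u)$, whereas you organize the cancellation forest-by-forest using $a^2+b^2+2ab=n^2$; the content is identical.
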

\begin{proof}
From Theorem \ref{main1}, we have the weighted version of Corollary \ref{main1_undirected}:
\begin{equation*}
   \mathbf{G}(u,v)= \frac{1}{n^2\tau}\Big(
   \sum_{\substack{T_1\cup T_2\in \mathbb{F}_2\\u\in T_1, r(T_1)=v}}|T_2|^2 \omega(T_1 \cup T_2)
   -\sum_{\substack{T_1\cup T_2\in \mathbb{F}_2\\u\in T_2, v =r(T_1)}}|T_1||T_2|\omega(T_1 \cup T_2) \Big).
   \end{equation*}
Thus, we  we have
\begin{align*}
   \mathbf{G}(v,v) -  \mathbf{G}(u,v)&=
   \frac{1}{n^2\tau} \sum_{\substack{T_1\cup T_2\in \mathbb{F}_2\\v \in T_1}}|T_2|^2 \omega(T_1 \cup T_2)\\
   &\quad\quad -\frac{1}{n^2 \tau}\Big(\sum_{\substack{T_1\cup T_2\in \mathbb{F}_2\\u,v\in T_1}}|T_2|^2 \omega(T_1 \cup T_2)- \sum_{\substack{T_1\cup T_2\in \mathbb{F}_2\\u\in T_2, v \in T_1}}|T_1||T_2| \omega(T_1 \cup T_2)\Big)\\
   &=\frac{1}{n^2\tau} \sum_{\substack{T_1\cup T_2\in \mathbb{F}_2\\u \in T_2,v \in T_1}}|T_2|
   (|T_1|+|T_2|) \omega(T_1 \cup T_2)\\
   &=\frac{1}{n\tau}\sum_{\substack{T_1\cup T_2\in \mathbb{F}_2\\u \in T_2,v \in T_1}}|T_2|\omega(T_1 \cup T_2).
\end{align*}
Similarly,
\begin{equation*}
   \mathbf{G}(u,u) -  \mathbf{G}(v,u) =\frac{1}{n\tau}\sum_{\substack{T_1\cup T_2\in \mathbb{F}_2\\u \in T_2,v \in T_1}}|T_1|\omega(T_1 \cup T_2).
\end{equation*}
Together, we have
\begin{align*}
    R_{e \hspace{-.02in}f\hspace{-.02in}f}(u,v)&= \frac{1}{n\tau}\sum_{\substack{T_1\cup T_2\in \mathbb{F}_2\\u \in T_2,v \in T_1}}(|T_1|+|T_2|) \omega(T_1 \cup T_2)\\
    &=\frac{1}{\tau \omega(e)}\sum_{\substack{T\in \mathbb{T}_e}} \omega(T)\\
   &=\frac{\tau_{e}}{\tau \omega(e)},
\end{align*} where $\mathbb{T}_e$ is the set of all (unrooted) trees containing the edge $e$.
\end{proof}

For digraphs, the expression for $R_{e \hspace{-.02in}f\hspace{-.02in}f}$ is not as elegant. Nevertheless,
we use the forest formula to derive several general upper bounds for hitting time.

Here are generalizations of Corollary~3.4 and Corollary~3.5 in \cite{chang2014spanning}, originally obtained for undirected graphs using Chung-Yau invariants.
\begin{theorem}\label{hittingbounds}
Suppose $\Gamma$ is a strongly-connected weighted digraph.
\begin{description}
\item[(i)]
For an edge  $e=(v,u)$,  we have
\begin{equation*}
H(u,v)\leq \frac{d_v}{\omega(v,u)}(\pi_v^{-1}-1).
\end{equation*}
\item[(ii)]
For distinct vertices $u,v$, let  $R$ denote the set of vertices $b$ such that there exists a direct path from $u$ to $b$ without passing $v$. Then
\begin{equation*}
H(u,v)\leq \max_{b \in R}\frac{d_b}{\omega(b,v)}.
\end{equation*}
\end{description}
\end{theorem}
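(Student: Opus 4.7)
The plan is to derive both inequalities directly from the forest formulas for $H(u,v)$ established in Theorem~\ref{hitting_time}, using the Markov chain tree theorem~(\ref{mctt}) for part (i) and a short bijective argument for part (ii).

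For part (i), I start from Theorem~\ref{hitting_time}(ii), which gives
$H(u,v) = \frac{1}{\tau_v \omega(v,u)} \sum_{T \in \mathbb{T}^*_e} d_{r(T)} \omega(T)$ for $e = (v,u)$. Any in-tree containing the edge $(v,u)$ must have $v$ as a non-root vertex (since $v$ has an outgoing edge inside $T$), so $\mathbb{T}^*_e$ is contained in the set of in-trees whose root differs from $v$. Relaxing the sum accordingly gives the bound $\sum_{T \in \mathbb{T}^*_e} d_{r(T)} \omega(T) \le \sum_{w \neq v} d_w \tau_w$. Invoking~(\ref{mctt}) in the form $\sum_w d_w \tau_w = d_v \tau_v / \pi_v$ converts the right side to $d_v \tau_v (\pi_v^{-1} - 1)$, and dividing by $\tau_v \omega(v,u)$ yields exactly the stated inequality.

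For part (ii), I begin from Theorem~\ref{hitting_time}(i), which reads $H(u,v) \tau_v = \sum d_{r(T_2)}\, \omega(T_1 \cup T_2)$, summed over rooted 2-forests with $v = r(T_1)$ and $u \in T_2$. The crucial observation is that for any such forest, $u$ reaches $b := r(T_2)$ along a directed path in $T_2$, and this path avoids $v$ (which lies in $T_1$), so $b \in R$. Setting $\alpha := \max_{b \in R} d_b / \omega(b,v)$ (the inequality is trivial when $\alpha = +\infty$), I substitute the bound $d_{r(T_2)} \leq \alpha\, \omega(r(T_2),v)$, so the summand becomes $\alpha \cdot \omega(r(T_2),v)\, \omega(T_1 \cup T_2) = \alpha \cdot \omega\bigl(T_1 \cup T_2 \cup (r(T_2),v)\bigr)$. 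The augmented subgraph $T_1 \cup T_2 \cup (r(T_2),v)$ is a spanning in-tree of $\Gamma$ rooted at $v$.

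The remaining step, which I view as the main (though modest) obstacle, is to show that the "add the missing edge" operation is a bijection between the rooted 2-forests appearing in the sum and the spanning in-trees of $\Gamma$ rooted at $v$. Given any in-tree $T$ rooted at $v$, the vertex $u \neq v$ has a unique directed path to $v$ in $T$, and its last edge $(b,v)$ is uniquely determined; deleting that edge splits $T$ into $T_1 \cup T_2$ with $v = r(T_1)$, $b = r(T_2)$, and $u \in T_2$ (since $u$'s path to $v$ passed through this edge). The inverse is clear. Under this bijection the sum collapses to $\sum_{T} \omega(T) = \tau_v$, so we conclude $H(u,v)\tau_v \leq \alpha \tau_v$, i.e., $H(u,v) \leq \alpha$, as desired.
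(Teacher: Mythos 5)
Your proof is correct and takes essentially the same route as the paper: part (i) is the paper's argument (the paper simply re-derives Theorem~\ref{hitting_time}(ii) inline before relaxing the sum to $\sum_{w\neq v} d_w\tau_w$ and applying (\ref{mctt})), and part (ii) is the paper's edge-addition argument, where you sharpen the paper's observation that each in-tree rooted at $v$ ``appears at most once'' into a full bijection --- a valid but unneeded strengthening that yields the same bound.
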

\begin{proof}
By Theorem \ref{hitting_time}, we have
\begin{align*}
    H(u,v)&=\frac{1}{\tau_v}\sum_{b\neq v} d_b
    \sum_{\substack{T_1\cup T_2\in \mathbb{F}^*_2\\ u\in T_2, v=r(T_1)\\b=r(T_2)}} \omega(T_1\cup T_2)\\
    &=\frac{1}{\tau_v\omega(v,u)}\sum_{b\neq v} d_b
    \sum_{\substack{T_1\cup T_2\in \mathbb{F}^*_2\\ u\in T_2, v=r(T_1)\\b=r(T_2)}} \omega(T_1\cup T_2\cup(v,u))\\
    &=\frac{1}{\tau_v\omega(v,u)}
     \sum_{b \not = v}d_b\sum_{\substack{T\in \mathbb{T}^*_e,b=r(T) }} \omega(T)\\
     &\leq \frac{1}{\tau_v\omega(v,u)}\Big(\sum_b d_b\tau_b-d_v\tau_v\Big)\\
     &= \frac{d_v}{\omega(v,u)}\Big(\frac{\sum_b d_b\tau_b}{d_v\tau_v}-1\Big)\\
     &= \frac{d_v}{\omega(v,u)}\Big(\pi_v^{-1}-1\Big),
\end{align*} proving (i).

For (ii) we assume $(b,v)\in E$ for each $b\in R$, otherwise RHS of the inequality is infinity and there's nothing to prove. Note that any $2$-forest $T_1\cup T_2\in \mathbb{F}_2^*$ with $u\in T_2$, $r(T_1)=v$ and $r(T_2)=b$ guarantees $b\in R$, hence Theorem \ref{hitting_time} gives,\begin{align*}
    H(u,v)&=\frac{1}{\tau_v}\sum_{b\neq v} d_b
    \sum_{\substack{T_1\cup T_2\in \mathbb{F}^*_2\\ u\in T_2, v=r(T_1)\\b=r(T_2)}} \omega(T_1\cup T_2)\\
    &=\frac{1}{\tau_v}\sum_{b\in P} \frac{d_b}{\omega(b,v)}
    \sum_{\substack{T_1\cup T_2\in \mathbb{F}^*_2\\ u\in T_2, v=r(T_1)\\b=r(T_2)}} \omega(T_1\cup T_2\cup (b,v))\\
    &\leq \max_{b\in P}\{\frac{d_b}{\omega(b,v)}\} \frac{1}{\tau_v}\sum_{b\in R}\sum_{\substack{T_1\cup T_2\in \mathbb{F}^*_2\\ u\in T_2, v=r(T_1)\\b=r(T_2)}} \omega(T_1\cup T_2\cup (b,v)).
\end{align*}It suffices to note that $T_1\cup T_2\cup (b,v)$ is an in-tree rooted at $v$ and each such an in-tree appears in the summand of RHS at most once.
\end{proof}

For the case of undirected graphs, the upper bound for the hitting time can be further improved.
\begin{theorem}\label{hitting_time_upper_undirected}
 In a connected weighted undirected  graph $\Gamma$, if $\{u,v\}$ is an edge, then
 \begin{equation*}
    H(u,v)= (\vol(\Gamma)-d_v)R_{e \hspace{-.02in}f\hspace{-.02in}f}(u,v).
\end{equation*}
If $u$ and $v$ are not adjacent, let $P$ denote a path connecting $u$ and $v$ with length $l$. Then we have
\begin{equation*}
    H(u,v)\leq l\vol(\Gamma)\max_{e \in E(P)}R_{e \hspace{-.02in}f\hspace{-.02in}f}(e).
\end{equation*}
\end{theorem}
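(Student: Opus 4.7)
The plan is to combine the forest expression for $H(u,v)$ from Theorem \ref{hitting_time}(iii) with the spanning-tree formula for effective resistance from Corollary \ref{eff}, handling the two cases separately.

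For part (i), where $e=\{u,v\}$ is an edge, Theorem \ref{hitting_time}(iii) gives
\[
 H(u,v)=\frac{1}{\tau}\sum_{\substack{T_1\cup T_2\in\mathbb{F}_2\\ u\in T_2,\,v\in T_1}}\vol(T_2)\,\omega(T_1\cup T_2).
\]
Since $u$ and $v$ sit in different components of each 2-forest appearing here and $\{u,v\}$ is an edge, the map $T_1\cup T_2\mapsto T_1\cup T_2\cup\{e\}$ is a bijection onto the set $\mathbb{T}_e$ of spanning trees containing $e$. Under this bijection $T_2$ becomes the connected component $T_u$ of $T\setminus e$ that holds $u$, and $\omega(T_1\cup T_2)=\omega(T)/\omega(e)$. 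Hence
\[
 H(u,v)=\frac{1}{\tau\,\omega(e)}\sum_{T\in\mathbb{T}_e}\vol(T_u)\,\omega(T).
\]
The crux is then to extract the factor $\vol(\Gamma)-d_v$ from this sum, using $V=T_u\sqcup T_v$ and $v\in T_v$ so that $\vol(T_u)=\vol(\Gamma)-\vol(T_v)$. Combining with $\tau_e=\sum_{T\in\mathbb{T}_e}\omega(T)$ and Corollary \ref{eff} ($R_{e\hspace{-.02in}f\hspace{-.02in}f}(u,v)=\tau_e/(\tau\,\omega(e))$) then collapses the right-hand side to $(\vol(\Gamma)-d_v)\,R_{e\hspace{-.02in}f\hspace{-.02in}f}(u,v)$.

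For part (ii), I would use the subadditivity of expected hitting times, $H(x,y)\leq H(x,z)+H(z,y)$, which follows from the strong Markov property applied at the first visit to $z$. Iterating along the given path $P=v_0 v_1\cdots v_l$ with $v_0=u$ and $v_l=v$ yields $H(u,v)\leq\sum_{i=0}^{l-1}H(v_i,v_{i+1})$. For each adjacent pair, part (i) together with the crude bound $\vol(\Gamma)-d_{v_{i+1}}\leq\vol(\Gamma)$ gives $H(v_i,v_{i+1})\leq\vol(\Gamma)\,R_{e\hspace{-.02in}f\hspace{-.02in}f}(\{v_i,v_{i+1}\})\leq\vol(\Gamma)\max_{e\in E(P)}R_{e\hspace{-.02in}f\hspace{-.02in}f}(e)$. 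Summing $l$ such bounds delivers the stated inequality.

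I expect the main obstacle to be the passage from the tree-dependent quantity $\vol(T_u)$ to the uniform value $\vol(\Gamma)-d_v$ in part (i): extracting it directly from the sum requires either a cancellation identity for $\sum_{T\in\mathbb{T}_e}\vol(T_v)\omega(T)$ that reduces it to $d_v\,\tau_e$, or an averaging argument exploiting symmetries of $\mathbb{T}_e$ with respect to the edge $e$. If no such identity is available, the natural outcome of the computation is the one-sided bound $H(u,v)\leq(\vol(\Gamma)-d_v)R_{e\hspace{-.02in}f\hspace{-.02in}f}(u,v)$, which is already enough to drive part (ii). The triangle inequality for hitting times, though routine, should be stated cleanly via the strong Markov property or a direct coupling.
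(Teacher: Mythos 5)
Your derivation is essentially the paper's route (Theorem \ref{hitting_time} plus Corollary \ref{eff} for the edge case, then subadditivity of hitting times along $P$), but your instinct about the ``crux'' is exactly right, and it exposes a genuine problem with the statement rather than with your argument. Your identity
\begin{equation*}
H(u,v)=\frac{1}{\tau\,\omega(e)}\sum_{T\in\mathbb{T}_e}\vol(T_u)\,\omega(T),\qquad T_u:=\text{the component of }T-e\text{ containing }u,
\end{equation*}
is correct, and there is no cancellation identity reducing $\sum_{T\in\mathbb{T}_e}\vol(T_v)\omega(T)$ to $d_v\tau_e$: since $\vol(T_u)=\vol(\Gamma)-\vol(T_v)$ and $\vol(T_v)\ge d_v$, the computation yields only the one-sided bound $H(u,v)\le(\vol(\Gamma)-d_v)R_{e \hspace{-.02in}f\hspace{-.02in}f}(u,v)$, with equality precisely when $T_v=\{v\}$ for every spanning tree $T$ containing $e$ (e.g.\ when $v$ has degree one). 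The equality asserted in part (i) is false in general: for the path $a$--$b$--$c$ with unit weights and $(u,v)=(a,b)$ one has $H(a,b)=1$, while $(\vol(\Gamma)-d_b)R_{e \hspace{-.02in}f\hspace{-.02in}f}(a,b)=(4-2)\cdot 1=2$; more structurally, if the equality held for both orderings of $u,v$ it would force $C(u,v)=(2\vol(\Gamma)-d_u-d_v)R_{e \hspace{-.02in}f\hspace{-.02in}f}(u,v)$, contradicting the commute-time formula $C(u,v)=\vol(\Gamma)R_{e \hspace{-.02in}f\hspace{-.02in}f}(u,v)$ that the paper itself derives. The paper's own proof goes astray at the step $\sum_{T\in\mathbb{T}^*_e,\,b=r(T)}\omega(T)=\tau_e$ for every $b\neq v$: an in-tree rooted at $b$ contains the directed edge $(v,u)$ only when $b$ lies in the $u$-side component of $T-e$, so the correct evaluation of that double sum is your $\sum_{T\in\mathbb{T}_e}\vol(T_u)\omega(T)$, not $(\vol(\Gamma)-d_v)\tau_e$.

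For part (ii), your argument (strong Markov property giving $H(u,v)\le\sum_i H(v_i,v_{i+1})$, then the edge bound with $\vol(\Gamma)-d_{v_{i+1}}\le\vol(\Gamma)$) coincides with the paper's, and it only needs the inequality version of (i); the same is true of the subsequent corollary bounding $\max_{u,v}H(u,v)$ by $\vol(\Gamma)\,\mathrm{diam}(\Gamma)\,R_{e \hspace{-.02in}f\hspace{-.02in}f}$. So your proposal proves everything that is actually true and everything used downstream; the only discrepancy with the statement is the equality sign in (i), which should be weakened to ``$\le$''.
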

\begin{proof}
We follow the proof of Theorem \ref{hittingbounds}. If $\{u,v\}$ is an edge, we have
\begin{align*}
    H(u,v)
    &=\frac{1}{\tau~\omega(v,u)}
     \sum_{b \not = v}d_b\sum_{\substack{T\in \mathbb{T}^*_e,b=r(T) }} \omega(T)\\
     &= \sum_{b \not = v}d_b \frac{\tau_e}{\tau~ \omega(v,u)}\\
     &= \Big(\sum_b d_b-d_v\Big)R_{e \hspace{-.02in}f\hspace{-.02in}f}(u,v) \\
     &= \Big(\vol(\Gamma) -d_v\Big)R_{e \hspace{-.02in}f\hspace{-.02in}f}(u,v),
\end{align*} 
by using Theorem \ref{eff}.

If $u$ and $v$ are not adjacent and are connected by a path $P$, from the subadditivity of the hitting time, we have
\begin{equation*}
    H(u,v)\leq \sum_{e \in E(P)} H(e)\leq  l~\vol(\Gamma)\max_{e \in E(P)}R_{e \hspace{-.02in}f\hspace{-.02in}f}(e).
\end{equation*} 
This completes the proof of Theorem \ref{hitting_time_upper_undirected}
\end{proof}

As an immediate consequence of Theorem \ref{hitting_time_upper_undirected}, we have the following upper bound for the maximal hitting time of an undirected graph.

\begin{corollary}
For a weighted connected undirected graph $\Gamma$, the maximal hitting time satisfies
\begin{equation*}
     \max_{u,v} H(u,v) \leq \vol(\Gamma) {\rm diam}(\Gamma) R_{e \hspace{-.02in}f\hspace{-.02in}f}
\end{equation*}
where ${\rm diam}(\Gamma)$ denotes the diameter of $\Gamma$ and $R_{e \hspace{-.02in}f\hspace{-.02in}f}$ is the maximum $R_{e \hspace{-.02in}f\hspace{-.02in}f}(e)$ among all edges $e $  in $\Gamma$.
\end{corollary}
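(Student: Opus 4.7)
The plan is to derive this corollary as a direct application of Theorem \ref{hitting_time_upper_undirected}, by replacing path-dependent quantities with their worst-case counterparts.

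First I would fix an arbitrary pair of vertices $u,v$ and reduce to the case $u \ne v$ (the case $u=v$ gives $H(u,v)=0$, which trivially respects the bound). Then I would choose $P$ to be a shortest path between $u$ and $v$ in $\Gamma$, so that its length satisfies $\ell(P) \le \mathrm{diam}(\Gamma)$. If $u$ and $v$ are adjacent, the first part of Theorem \ref{hitting_time_upper_undirected} gives
\begin{equation*}
H(u,v) \;=\; (\vol(\Gamma)-d_v)\, R_{e\!f\!f}(u,v) \;\le\; \vol(\Gamma)\, R_{e\!f\!f},
\end{equation*}
and since $\mathrm{diam}(\Gamma) \ge 1$ whenever $\Gamma$ has at least two vertices, the claimed inequality follows. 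If $u$ and $v$ are non-adjacent, the second part of Theorem \ref{hitting_time_upper_undirected} applied to $P$ gives
\begin{equation*}
H(u,v) \;\le\; \ell(P)\, \vol(\Gamma)\, \max_{e \in E(P)} R_{e\!f\!f}(e) \;\le\; \mathrm{diam}(\Gamma)\, \vol(\Gamma)\, R_{e\!f\!f},
\end{equation*}
using $\max_{e \in E(P)} R_{e\!f\!f}(e) \le R_{e\!f\!f}$ by the definition of $R_{e\!f\!f}$ as the maximum effective resistance over all edges. Taking the maximum over $u,v$ on the left yields the desired bound.

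There is essentially no obstacle here: the only thing to be slightly careful about is that Theorem \ref{hitting_time_upper_undirected} is stated in two cases (adjacent versus non-adjacent endpoints), and one must cover both. Everything else is bounding a single path quantity by the graph-wide maximum.
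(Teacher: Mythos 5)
Your proof is correct and matches the paper's intent exactly: the corollary is stated there as an immediate consequence of Theorem \ref{hitting_time_upper_undirected}, obtained precisely by choosing a shortest path and bounding its length by ${\rm diam}(\Gamma)$ and each edge's effective resistance by the global maximum. Your explicit handling of the adjacent case (using ${\rm diam}(\Gamma)\ge 1$ and noting $R_{e\hspace{-.02in}f\hspace{-.02in}f}(u,v)\le R_{e\hspace{-.02in}f\hspace{-.02in}f}$ since $\{u,v\}$ is an edge) is the same routine verification the paper leaves implicit.
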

For connected simple graphs on $n$ vertices,  the maximum hitting time is defined by
\begin{equation*}
    H^*(n)=\max_{\substack{\Gamma\\|V(\Gamma)|=n}} \max_{u,v} H(u,v),
\end{equation*} 
The upper bound of hitting time given in Theorem \ref{hitting_time_upper_undirected} gives an immediate upper bound of order $\Omega(n^3)$ for the maximum hitting time. In  \cite{brightwell1990maximum}, Winkler and Brightwell showed that $H^*(n)$ is of order $\frac{4}{27}n^3$  and determined the extremal graphs 
 as the lollipop graphs. 

For unweighted digraphs, the story is quite different.
We will use the example given in \cite{aksoychung2016} that achieves the maximum principal ratio
$\gamma(n)$ that is defined to be the ratio of the maximum to minimum values in the stationary distribution in a strongly-connected digraph on $n$ vertices with all edge-weights being $1$. It was shown in \cite{aksoychung2016} that $\gamma(n)= \big(\frac 2 3 + o(1) \big)(n-1)!$.

We consider a digraph $\Gamma(n)$ with vertex set $\{v_1, v_2, \ldots, v_n\}$ and edge set
\begin{equation*}
    E(\Gamma(n))=\{ (v_i, v_{i+1}):  1 \leq i \leq n-1 \} \cup \{(v_j, v_i):  1 \leq i < j \leq n-1 \} \cup \{(v_n,v_1) \}.
\end{equation*}
We require $n \geq 3$ and $\Gamma(5)$ is illustrated in Figure \ref{fig:hitting_example}.
\begin{figure}[h]
    \centering
    \scalebox{1.5}{\tikzfig{hitting_example}}
    \vspace{-0.5in}
    \caption{$\Gamma(5)$.}
    \label{fig:hitting_example}
\end{figure}

Now we choose $u=v_1$ and $v=v_n$. To compute $H(u,v)$, we note that  $\tau_v=1$ and  all in-tree with root $v$ must be a path. However, there are abundant choices for $T_2$ in the forest formula for $H(u,v)$ in Theorem~\ref{hitting_time}. If $v_i$ is the root of $T_2$, then there is some $j \geq i$ such that $T_1$ is a path from $v_{j+1}$ to $v$ and there are at least
$i^{j-i}$ choices for $T_2$. Hence
\begin{equation*}
    H(u,v)\geq \sum_{i=1}^{n-1} i\cdot\sum_{j=i}^{n-1} i^{j-i}.
\end{equation*}
A rough calculation gives a lower bound for the above:
\begin{equation*}
    H(u,v) \geq \max_{i} (i^{n-i})\approx e^{n \log n (1-\frac{\log \log n}{\log n})}.
\end{equation*}
In the other direction, there are at most $(n-1)\cdot n^{n-2}$ rooted $2$-forests on $n$ vertices. To see this, we use the facts that the complete graph has the most rooted $2$-forests and the combinatorial Laplacian of $K_n$ has eigenvalue  $0$ with multiplicity $1$ and eigenvalue $n$ with multiplicity $n-1$. By applying Theorem 2 in \cite{chung1996combinatorial}, the number of rooted $2$-forest in the complete graph on $n$ vertices is exactly $(n-1)\cdot n^{n-2}$.   Now we can use the forest formula in  Theorem \ref{hitting_time} to establish the following crude upper bound for the hitting time $H(u,v)$ :
\begin{equation*}
    H(u,v)\leq n^n=e^{n \log n}.
\end{equation*}
for any two vertices $u$ and $v$ in any directed weighted digraph on $n$ vertices.

\bibliographystyle{plain}
\bibliography{bib}
\end{document}